\def\Q{{\mathbb Q}}
\def\Z{{\mathbb Z}}
\def\fq{{\mathbb F}}
\def\p{{\mathfrak p}}
\def\P{{\mathfrak P}}
\def\q{{\mathfrak q}}
\def\Qq{{\mathfrak Q}}
\def\U{{\mathcal U}}
\def\Ll{{\mathcal L}}
\def\T{{\mathcal T}}
\def\1{{\bf 1}}
\def\Fr{{\rm Frob}}
\newtheorem{Theorem}{Theorem}
\newtheorem*{Example}{Example}
\author{Donghyeok Lim}
\address{Institute of Mathematical Sciences, Ewha Womans University, Seoul 03760, Republic of Korea}
\email{donghyeokklim@gmail.com}
\author{Christian Maire}
\address{FEMTO-ST Institute, Universit\'e Franche-Comt\'e, CNRS,  15B avenue des Montboucons, 25000 Besan\c con, FRANCE} 
\email{christian.maire@univ-fcomte.fr}
\begin{document}

\title{On the analyticity of the maximal extension of  a number field with prescribed ramification and splitting}


\begin{abstract}  
We determine all the $p$-adic analytic groups that are realizable as Galois groups of the maximal pro-$p$ extensions of number fields with prescribed ramification and splitting 
under an assumption which allows us to move away from the Tame Fontaine-Mazur conjecture.
\end{abstract}

\thanks{This work has been started during a visiting position for the second author at Ewha Womans University, and finished during a visiting fellow at the Western Academic for Advanced Research (WAFAR) of Western  University;  CM thanks the  Department of Mathematics at Ewha University  and the WAFAR  for providing a beautiful research atmosphere. We would like to thank Bill Allombert for his help with PARI/GP, and C\'ecile Armana   for useful remarks. The first author was supported by the Core Research Institute Basic Science Research Program through the National Research Foundation of Korea (NRF) funded by the Ministry of Education (Grant No. 2019R1A6A1A11051177) and the Basic Science Research Program through the National Research Foundation of Korea (NRF) funded by the Ministry of Education (Grant No. NRF-2022R1I1A1A01071431). The second author was also partially supported by the EIPHI Graduate School (ANR-17-EURE-0002).}

\subjclass{11R37, 11R32}

\keywords{Pro-$p$ extensions of number fields, restricted ramification, Galois representations, $p$-adic analytic groups.}

\date{\today}



\maketitle 


\section*{Introduction}

For a number field $K$, its absolute Galois group $G_K$ is a fundamental object of study.
The last decades have shown that 
 (continuous) Galois representations  $$\rho : G_K \rightarrow Gl_n(\Q_p)$$ occupy a central position in arithmetic geometry, serving as a fundamental tool to provide a bridge between the geometric and arithmetic aspects of number theory. 
  A governing philosophy is the conjecture of Fontaine and Mazur \cite[Conjecture 1]{FM} that an irreducible $p$-adic Galois representation of~$G_{K}$ comes from geometric object if it is unramified outside a finite set of primes and its restrictions to the decomposition subgroups at primes above~$p$ are potentially semi-stable. A variation of the conjecture is the `Tame Fontaine-Mazur conjecture' that if $S$ is a finite set of non-$p$ primes of $K$, then a $p$-adic analytic quotient of $G_K$ that is unramified outside $S$ is always finite (\cite[Conjecture 5a]{FM}). 

Hence, it is natural to study which $p$-adic analytic groups can be realized as $G_{S}^{T}$ which is a Galois group naturally defined in terms of ramification and splitting of places of number fields.

 \medskip
 
 Let us be more precise.
 Let $S$ and $T$ be two finite and disjoint sets of places of $K$. 
 Let ${\bar K}_S^T$ (resp. $K_S^T$) be the maximal (resp. the maximal pro-$p$) extension of $K$ unramified outside $S$ 
 and completely decomposed at $T$. We put ${\bar G_{S}^T}:={\bar G}_{K,S}^T:=Gal({\bar K}_S^T/K)$ (resp. $G_S^T:=G_{K,S}^T:=Gal(K_S^T/K)$).

 In this paper, we are interested  in  Galois representations $\rho: {\bar G}_{K,S}^T \rightarrow Gl_n(\Q_p)$ with  $p$-closed image in ${\bar K}_S^T$,
{\it i.e.} such that $H^1(ker(\rho),\Z/p)=1$. 
 More precisely, we want to characterize the possible $\Q_p$-Lie algebra $\Ll(\rho)$  of  the image of~$\rho$. For example, when $K=\Q$, $S=T=\emptyset$ then $\Ll(\rho)=\{0\}$ for every Galois representation~$\rho$ of ${\bar G}_{K,\emptyset}^\emptyset$ since this Galois group is trivial.
 
 \medskip

 Every  {\it compact} $p$-adic analytic group   contains a torsion free  pro-$p$ group  as an open subgroup. Hence by base change,  one can assume that $S$ contains only finite places, and we can focus on $G_{K',S}^T$ for some finite extension $K'/K$ in ${\bar K}_S^T$.

 \medskip
 
 In general, this question is difficult because it is not easy to determine whether $G_{S}^{T}$ is not FAb group {\it i.e.} if its open subgroups have finite abelianization. If $G_{S}^{T}$ is FAb, then the problem of determining the analyticity of $G_{S}^{T}$ shares many difficulties with the (Tame) Fontaine-Mazur conjecture mentioned before. 
 
 Thus to make the study more accessible, we assume the following condition  $(C)$ $$1+\delta_S> |T|+r_1+r_2,$$
 where $\displaystyle{\delta_S}$ denotes the sum $\displaystyle{\sum_{\p\in S_p'} [K_\p:\Q_p]}$ for $S_p'=\{{\rm prime \ } \p \in S,  \p|p\}$, and $r_1$ (resp. $r_2$) is the number of  real (resp. complex) places of $K$. By the assumption $(C)$, the pro-$p$ group $G_S^T$ has $\Z_p$ as a quotient by class field theory (cf. \cite[Chapter III, Theorem 1.6]{gras}). In particular, we move away from the tame Fontaine-Mazur conjecture.

\medskip

 We first prove:
 
 \begin{Theorem}\label{TheoremA}
Assuming $(C)$, the pro-$p$ group $G_{S}^T$ is  a $p$-adic analytic group if and only if, it is virtually isomorphic to:
\begin{enumerate}
\item[$(i)$] $\Z_p$, or
 \item[$(ii)$] $\Z_p \rtimes \Z_p$ (noncommutative), or
 \item[$(iii)$] $\Z_p \times \Z_p$.
\end{enumerate}
Moreover, we have $\delta_S=r_1+r_2+|T|$. 
 \end{Theorem}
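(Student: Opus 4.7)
The plan is a two-sided squeeze exploiting both class field theory and the rigidity of uniform pro-$p$ groups. On the arithmetic side, condition $(C)$ forces $G_S^T$ to have $\Z_p$ as a quotient (\cite[Chapter III, Theorem 1.6]{gras}), and via the Shafarevich--Koch formula it gives a lower bound for the deficit
\[ d(G_S^T)-r(G_S^T)\geq 1+\delta_S-(r_1+r_2)-|T|, \]
where $d$ and $r$ denote respectively the generator and relation ranks (the $\fq_p$-dimensions of $H^1$ and $H^2$). On the group-theoretic side, if $G:=G_S^T$ is $p$-adic analytic then by Lazard it contains an open uniform pro-$p$ normal subgroup $U$, corresponding to a finite subextension $K'\subseteq K_S^T$ with $U=G_{K',S'}^{T'}$; and for such a uniform group of dimension $d$ one has the rigid equalities $d(U)=d$ and $r(U)=\binom{d}{2}$.

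First I would verify that condition $(C)$ propagates from $(K,S,T)$ to $(K',S',T')$ with a nondegenerate deficit. Since $K'\subseteq K_S^T$, primes in $T$ split completely in $K'/K$, so $|T'|=[K':K]\,|T|$ and $\delta_{S'}=[K':K]\,\delta_S$, while $r_1'+r_2'\leq[K':K]\,(r_1+r_2)$. Rewriting $(C)$ as $\delta_S-|T|\geq r_1+r_2$ and multiplying by $[K':K]$ gives $\delta_{S'}-|T'|\geq r_1'+r_2'$; hence $1+\delta_{S'}-(r_1'+r_2')-|T'|\geq 1$, and the Shafarevich--Koch inequality applied to $G_{K',S'}^{T'}=U$ becomes
\[ d-\binom{d}{2}\;=\;d(U)-r(U)\;\geq\;1+\delta_{S'}-(r_1'+r_2')-|T'|\;\geq\;1. \]
The inequality $d-\binom{d}{2}\geq 1$ is possible only for $d\in\{1,2\}$, and in both cases with equality.

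The equality case tightens the chain: it forces $\delta_{S'}=r_1'+r_2'+|T'|$, which via the base-change identities above forces $\delta_S=r_1+r_2+|T|$, giving the \emph{moreover} assertion. For the classification, a $p$-adic analytic pro-$p$ group of dimension at most $2$ is virtually one of $\{1\}$, $\Z_p$, $\Z_p\times\Z_p$, or the non-commutative $\Z_p\rtimes\Z_p$; infiniteness of $G_S^T$ (from $(C)$) rules out $\{1\}$, leaving the three cases $(i)$--$(iii)$ of the statement.

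The main obstacle is the exact form of the Shafarevich--Koch inequality in the presence of a splitting set $T$: one must carefully handle the contributions of the infinite and non-$p$ finite places in $S$, as well as any Leopoldt-type correction, to ensure that the right-hand side is indeed $1+\delta_S-(r_1+r_2)-|T|$. Once that bound is in hand, together with the classification of $p$-adic analytic pro-$p$ groups of dimension at most $2$ up to virtual isomorphism, the proof follows by the straightforward squeeze sketched above.
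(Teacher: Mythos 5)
Your argument is correct, but it reaches the conclusion by a genuinely different route from the paper. The paper never passes to a uniform subgroup at the outset: it first uses the soft fact that a $p$-adic analytic group has uniformly bounded generator ranks (hence bounded $\mathrm{rk}_{\Z_p}U^{ab}$) over all open subgroups $U$, reads off from the class-field-theoretic rank formula $\mathrm{rk}_{\Z_p}U^{ab}=[L:K](\delta_S-(r_1+r_2+|T|))+1+\mathrm{rk}_{\Z_p}\ker(\varphi_{L,S}^T)$ that $\delta_S=r_1+r_2+|T|$ and that $\mathrm{rk}_{\Z_p}\ker(\varphi_{L,S}^T)$ stays bounded, then bounds $|\chi_2(U)|$ uniformly via Lemma \ref{lemm_bounded_H2} and invokes Schmidt's bounded-defect theorem (Proposition \ref{SchmidtEuler}) to conclude $cd(G_S^T)\le 2$ before appealing to the classification. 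You instead go straight to a uniform open subgroup $U=G_{K',S'}^{T'}$, use Lazard's computation $\dim_{\fq_p}H^n(U,\fq_p)=\binom{d}{n}$ to pin down $d(U)-r(U)=d-\binom{d}{2}$, and squeeze this against the Shafarevich--Koch lower bound $1+\delta_{S'}-(r_1'+r_2')-|T'|\ge 1$, obtaining both $d\le 2$ and, from the forced equality, the assertion $\delta_S=r_1+r_2+|T|$. Your bookkeeping is sound: under $(C)$ one has $S\supseteq S_p'\neq\emptyset$, so the correction term $\theta$ in Koch's bound on $d_pH^2$ vanishes, and the terms $\sum_{\p\in S}\delta_\p-\delta+d_pV_S^T/K^{\times p}$ cancel in the difference $d-r$, so the right-hand side is exactly as you claim. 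What each approach buys: the paper's route yields the intermediate statement $cd(G_S^T)\le 2$ for the whole group and only needs boundedness of generator ranks rather than the full Lazard isomorphism, at the cost of invoking Schmidt's theorem on partial Euler characteristics; your route is shorter and self-contained once Lazard's cohomology of uniform groups is granted, and it derives the ``moreover'' equality as the equality case of a single inequality rather than from the growth of abelianized ranks.
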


Observe that when $G_S^T \simeq \Z_p$, then $G_S^T$ is potentially  of local type.
 Here, {\it potentially of local type} means that there exists a prime $\p|p$ of $K_S^T$ above a prime in $S$ such that the decomposition subgroup of $G_S^T$ at $\p$ is open. This notion was studied by Wingberg in \cite{Wingberg_local}.
 We will  observe that if  $\zeta_p\in K$, then $G_S^T$ is also potentially of local type in the cases $(ii)$ and $(iii)$.

 \medskip
 
 Back to the original question: let  $\rho : {\bar G}_{K,S}^T \rightarrow Gl_n(\Q_p)$ be a  Galois representation with $p$-closed image in ${\bar K}_S^T$. Then in $(i)$ (resp. in $(iii)$) the Lie algebra $\Ll(\rho)$ is the abelian $\Q_p$-algebra of dimension $1$ (resp. of dimension  $2$).
In $(ii)$, $\Ll(\rho)$ is the noncommutative  Lie algebra of dimension $2$; $\Ll(\rho)$ can be generated by  $x$ and $y$ satisfying the relation $[x,y]=x$.

 \medskip
 
 It is easy to produce examples of type $(i)$ (namely when $K=\Q$ and $S=S_p$, the set of primes of $K$ that are  $p$-adic). The examples of type $(ii)$ were studied by Wingberg \cite{Wingberg_local} when $S_p \subset S$ and $T=\emptyset$. However, no example of type $(iii)$ was known.
 As the second result, one obtains:

\begin{Theorem}\label{TheoremB}
Let $p$ be an odd prime. There is a number field $K$ and a finite set $T$  of primes of $K$ such that $G_{K,S_p}^{T} \simeq \Z_p\times \Z_p$.
The set $T$ is given by the Chebotarev density theorem.
\end{Theorem}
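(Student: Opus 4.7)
The plan is to exhibit explicitly a pair $(K,T)$ realizing case $(iii)$ of Theorem~\ref{TheoremA}. The construction goes in three stages: choose a base field $K$, choose $T$ by Chebotarev density, and combine Theorem~\ref{TheoremA} with a short upgrade argument to pass from ``virtually $\Z_p\times \Z_p$'' to ``$\Z_p\times \Z_p$''.

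For the first stage, I would take $K$ to be an imaginary abelian extension of $\Q$ (for instance, a suitable CM subfield of a cyclotomic field), so that Leopoldt's conjecture is known by Brumer--Ax and the pro-$p$ abelianization $(G_{K,S_p})^{\rm ab}$ has $\Z_p$-rank exactly $r_2+1$, together with a computable finite torsion part coming from local principal units modulo the (finite) global units. The signature of $K$ and the splitting behaviour of $p$ in $K$ are arranged so that the equality $\delta_{S_p}=r_1+r_2+|T|$ required by case $(iii)$ of Theorem~\ref{TheoremA} can be met with some non-empty $T$; condition $(C)$ then follows automatically from this equality.

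For the second stage, $T$ is produced by applying the Chebotarev density theorem to a sufficiently large finite layer of the maximal abelian pro-$p$ extension of $K$ unramified outside $p$: the Frobenius classes of the chosen primes $\q_i$ are prescribed so that, after imposing splitting at $T=\{\q_i\}$, the abelianization $(G_{K,S_p}^T)^{\rm ab}$ becomes exactly $\Z_p\times\Z_p$ (torsion-free of rank $2$). Class field theory translates this into a linear-algebra requirement on the images of $\Fr_{\q_i}$ together with the tame local contributions at the $\q_i$, and Chebotarev guarantees infinitely many primes attaining any prescribed such configuration. Once this is done, Theorem~\ref{TheoremA} forces $G_{K,S_p}^T$ to be virtually one of $\Z_p$, $\Z_p\rtimes\Z_p$, or $\Z_p\times\Z_p$; the first two have abelianization of $\Z_p$-rank at most $1$, so only case $(iii)$ survives. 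To pass from ``virtually'' to an honest isomorphism, I would use that a pro-$p$ group with torsion-free abelianization has no non-trivial finite normal subgroup (such a subgroup would meet the centre and produce torsion in the abelianization); hence $G_{K,S_p}^T$ is torsion-free of Lie dimension $2$, i.e.\ uniform, and among uniform pro-$p$ groups of dimension $2$ only $\Z_p\times\Z_p$ has abelianization of rank $2$.

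The main obstacle is the Chebotarev calibration in the second stage: one has to kill the finite torsion of $(G_{K,S_p})^{\rm ab}$ without reducing its free $\Z_p$-rank below $2$. Since Chebotarev only controls Frobenius modulo finite quotients, this requires a careful choice of finite layer $L/K$ and an explicit verification that the linear-algebra problem over $\Z_p$ --- balancing the contributions of the $\Fr_{\q_i}$ and of the tame local units at the $\q_i$ against the finite torsion of $(G_{K,S_p})^{\rm ab}$ --- admits a solution. This is precisely where the explicit structure of $K$ as a controlled CM abelian extension of $\Q$ enters, and I expect it to be the bulk of the proof.
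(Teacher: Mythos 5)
Your argument has a fatal circularity at the point where you invoke Theorem~\ref{TheoremA}. That theorem says: \emph{if} $G_{K,S_p}^T$ is $p$-adic analytic, then it is virtually one of $\Z_p$, $\Z_p\rtimes\Z_p$, $\Z_p\times\Z_p$. It does not assert that $G_{K,S_p}^T$ is analytic under condition $(C)$ --- generically it is not. Arranging $(G_{K,S_p}^T)^{\rm ab}\simeq \Z_p\times\Z_p$ by Chebotarev in abelian (or any finite) layers gives you no control whatsoever over the group beyond its abelianization: the group could perfectly well be free pro-$p$ of rank $2$, or a one-relator group with degenerate cup product, neither of which is analytic. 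This is exactly the difficulty the paper isolates: one cannot apply Chebotarev in the infinite extension $K_{S_p}/K$, so one must find a \emph{finite} quotient in which the Frobenius condition detects the full group structure, not just the abelianization. The paper's solution is to first choose $K$ imaginary biquadratic and $p$-rational so that $G_{K,S_p}$ is free of rank $3$, pick $\p$ so that $G_{K,S_p}^{\{\p\}}=:F$ is free of rank $2$, and then observe that adding one more completely split prime $\q$ yields a one-relator group whose relator is $\Fr_\q$; whether this group is Demushkin is decided by the class of $\Fr_\q$ modulo $F_3$ (Zassenhaus filtration), a finite quotient where Chebotarev applies (Proposition~\ref{quadraticrelation}). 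The Demushkin property plus $(G^{ab})\simeq\Z_p^2$ then gives $G\simeq\Z_p\times\Z_p$ outright, with no ``virtually'' to remove. Your proposal contains no mechanism for controlling the relation structure, which is the actual content of the theorem.

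A secondary error: your upgrade step claims that a pro-$p$ group with torsion-free abelianization has no nontrivial finite normal subgroup. This is false. The group $G=\langle x,y,z \mid [x,y]=z,\ z^p=1,\ z \text{ central}\rangle$ has $G^{ab}\simeq\Z_p^2$ (torsion-free), is virtually $\Z_p\times\Z_p$ and analytic of dimension $2$, yet contains the finite normal subgroup $\langle z\rangle$ and is not isomorphic to $\Z_p\times\Z_p$. A central torsion element need only land in $[G,G]$, not produce torsion in $G^{ab}$. So even granting analyticity, your passage from ``virtually $\Z_p\times\Z_p$ with torsion-free rank-$2$ abelianization'' to ``isomorphic to $\Z_p\times\Z_p$'' does not go through as written; the paper avoids this entirely via the classification of Demushkin groups by their abelianizations.
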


We remark that $\Z_p\times \Z_p$ cannot be realized as $G_{S}$ when $S$ contains $S_p$ because $G_{S}$ has Euler-Poincar\'{e} characteristic $-r_2$ whereas  $\Z_p\times \Z_p$ has Euler-Poincar\'{e} characteristic $0$. Hence, if $G_{S}$ is isomorphic to $\Z_p\times \Z_p$, then $K$ is totally real. In that case, the $\Z_p$-rank of $G_{S}$ is~$1$ by Leopoldt conjecture. 

\smallskip

By a numerical computation, we also find an example for $p=2$ for which $G_S^T\simeq \Z_p\times \Z_p$.

\begin{Example}
 Take $K=\Q(\zeta_8)$. 
Let $\p$ (resp. $\q$) be the prime ideal $(2+\zeta_8+2\zeta_{8}^2)$ (resp. $(6-\zeta_8+6\zeta_{8}^2)$) of $K$ above $7$ (resp. $71$). Then, $G_{K,S_2}^{\{\p,\q\}} \simeq \Z_p\times \Z_p$. 
\end{Example}

\medskip

The paper contains three sections. In Section \ref{section_arithmetic},  we recall basic facts about pro-$p$ groups and arithmetic in  pro-$p$-extensions of a number field.
 In Section \ref{section_proofs},   we prove Theorems \ref{TheoremA} and \ref{TheoremB}. 
 The last section is devoted to some remarks. In particular, the proof of Theorem~\ref{TheoremB} allows us to compute a lower bound for the number of  sets $T=\{\p,\q\}$ of primes of $K$ such that $N\p, N\q \leq X$, {$G_{K,S_p}^{T} \simeq \Z_p\times \Z_p$} which holds for generic pairs $(K,p)$ of an imaginary biquadratic field $K$ and an odd prime $p$ under the recent conjecture of Gras on $p$-rationality of number fields.

 \medskip
 
 All calculations were performed using PARI/GP  \cite{pari}.

 \medskip
 
 \medskip
 
{\bf Notations.} 
Throughout this article $p$ is a prime number. 

$\bullet$ If $M$ is a finitely generated $\Z_p$-module, set $d_p M:=dim_{\fq_p} M/M^p$, $M[p]:=\{m\in M, pm=0\}$,  and  $\mathrm{rk}_{\Z_p}M={\rm dim}_{\Q_p} \ \Q_p \otimes_{\Z_p} M$. 

$\bullet$ Let $G$ be a {finitely generated} pro-$p$ group. Set $G^{ab}:=G/[G,G]$, $G^{p,el}:=G^{ab}/(G^{ab})^p$, and $d_p G:= dim_{\fq_p}G^{p,el}$. For $n\geq 1$, $(G_n)$ denotes the Zassenhaus filtration of $G$ (cf. \cite[Chapter 7]{Koch}).


\section{Generalities on pro-$p$ groups and Galois groups with restricted ramification}\label{section_arithmetic}

In this section, we briefly recall basic facts  that are necessary in this paper.

\subsection{The partial Euler-Poincar\'{e} characteristic of pro-$p$ groups}

Let $G$ be a finitely generated pro-$p$ group. 
 Recall that the cohomological dimension $cd(G)$ of a pro-$p$ group $G$ is defined to be the smallest integer $k$ such that $H^{k}(G,\Z/p) \neq 0$ and $H^{k+1}(G, \Z/p)=0$. 
 
 Suppose that the groups  $H^i(G,\Z/p)$ are finite for $i=1,\cdots, n$. Then the $n$-th partial Euler-Poincar\'{e} characteristic $\chi_n(G)$ is defined to be
 $$\chi_n(G)=\sum_{i=0}^n (-1)^i d_pH^i(G,\Z/p). $$
 The cohomological dimension of a pro-$p$ group can be studied by the partial Euler-Poincar\'{e} characteristic according to the following theorem of Schmidt \cite{Schmidt0}.

\begin{prop}\label{SchmidtEuler}
Let $G$ be a pro-$p$ group such that $H^{i}(G,\Z/p)$ is finite for $0 \leq i \leq n$. Suppose that there is an integer $N$ such that $(-1)^n\chi_n(U)+N \geq (-1)^n(G:U)\chi_n(G)$ for all open subgroups $U$ of $G$. Then either~$G$ is finite or $cd(G) \leq n$.
\end{prop}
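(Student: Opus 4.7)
The plan is to argue by contrapositive: assuming $G$ is infinite and $cd(G) \geq n+1$, I will produce, for every fixed $N$, an open subgroup $U \leq G$ violating the displayed inequality. Two standard inputs will be invoked. First, the theorem of Serre on pro-$p$ groups: $cd(U) = cd(G)$ for every open $U \leq G$, so $H^{n+1}(U, \Z/p) \neq 0$ for all such $U$. Second, the multiplicativity of the Euler-Poincar\'e characteristic due to Serre and Tate: if $d := cd(G) < \infty$, then $\chi(G) := \sum_{i=0}^{d}(-1)^i d_p H^i(G, \Z/p)$ is well-defined (each $d_p H^i(G,\Z/p)$ is finite for $i \leq d$) and $\chi(U) = (G:U)\chi(G)$ for every open $U$; this is proved by iterating the Hochschild-Serre spectral sequence through a chain of normal subgroups of index $p$.

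Assume first that $d < \infty$. Writing $\chi_n(H) = \chi(H) - \sum_{i=n+1}^{d}(-1)^i d_p H^i(H, \Z/p)$ and substituting into the candidate inequality, multiplicativity of $\chi$ yields
$$(-1)^n\bigl[\chi_n(U) - (G:U)\chi_n(G)\bigr] \;=\; \sum_{i=n+1}^{d} (-1)^{n+i+1}\bigl[d_p H^i(U, \Z/p) - (G:U)\, d_p H^i(G, \Z/p)\bigr].$$
The hypothesis bounds this quantity below by $-N$ uniformly in $U$. The leading ($i = n+1$) contribution $d_p H^{n+1}(U, \Z/p) - (G:U)\, d_p H^{n+1}(G, \Z/p)$ has a positive sign, so, since $d_p H^{n+1}(G) \geq 1$, the hypothesis is tantamount to forcing $d_p H^{n+1}(U)$ to grow at least linearly in $(G:U)$. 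One derives a contradiction by constructing a chain of open normal subgroups $U_k \triangleleft G$ with $(G:U_k) \to \infty$ along which $d_p H^{n+1}(U_k)$ grows strictly sub-linearly in $(G:U_k)$, forcing the right-hand side above to tend to $-\infty$. The sub-linear estimate is read off the $E_2$-page of Hochschild-Serre at each step $U_{k+1} \triangleleft U_k$ of index $p$, and is controlled by the already-finite cohomology groups $H^i(U_k, \Z/p)$ in degrees $i \leq n$.

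The case $cd(G) = \infty$ should reduce to the previous one by descending to an open subgroup: either $cd$ drops to a finite value on some open $U$, or a truncation-and-iteration argument transfers the contradiction to a suitable finite-dimensional quotient. The main obstacle throughout is the sub-linear growth estimate for $d_p H^{n+1}(U_k)$: naive restriction only delivers linear growth with the expected coefficient $(G:U_k)\, d_p H^{n+1}(G)$, and strict sub-linearity must be extracted from the higher differentials of the spectral sequence, which is precisely where the finiteness of $H^i(G, \Z/p)$ for $i \leq n$ genuinely enters the argument.
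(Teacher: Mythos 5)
This is a result the paper does not prove at all: it is quoted from Schmidt's note \emph{Bounded defect in partial Euler characteristics} \cite{Schmidt0}, so your attempt has to be judged on its own. As it stands it has a genuine gap: the step that actually carries the theorem --- producing, for every $N$, an open subgroup along which the ``defect'' $(-1)^n\bigl[(G:U)\chi_n(G)-\chi_n(U)\bigr]$ exceeds $N$ --- is only announced, not proved. You correctly reduce to showing that $d_pH^{n+1}(U,\Z/p)$ cannot grow linearly in $(G:U)$ with coefficient $d_pH^{n+1}(G,\Z/p)\geq 1$, and you correctly flag that naive restriction gives no such thing; but the ``strictly sub-linear growth read off the higher differentials of Hochschild--Serre'' is precisely the content of the theorem and is left unestablished. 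In addition, the scaffolding you build around this step rests on hypotheses you do not have: Serre's theorem $cd(U)=cd(G)$ requires $G$ to be $p$-torsion-free (an infinite $G$ satisfying the hypotheses could a priori have torsion, in which case $cd(G)=\infty$ and open subgroups may well have smaller $cd$, e.g.\ $\Z/p\times\Z_p$); the multiplicativity $\chi(U)=(G:U)\chi(G)$ of the \emph{full} Euler characteristic needs both $cd(G)<\infty$ and the finiteness of $d_pH^{i}(G,\Z/p)$ for all $i\leq cd(G)$, whereas the statement only assumes finiteness up to degree $n$; and the case $cd(G)=\infty$ is not reduced to anything.

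The missing mechanism is the following. Filtering the induced module $\Z/p[V/U]$ for $U\trianglelefteq V$ of index $p$ and truncating the long exact sequences at degree $n$ gives the exact identity $(-1)^n\chi_n(U)=p\,(-1)^n\chi_n(V)-D(V,U)$ with $D(V,U)\geq 0$, and moreover $D(V,U)$ dominates $\dim\ker\bigl(\mathrm{res}\colon H^{n+1}(V,\Z/p)\to H^{n+1}(U,\Z/p)\bigr)$; the defect is additive along chains. Since $\varinjlim_{U}H^{n+1}(U,\Z/p)=H^{n+1}(1,\Z/p)=0$ over all open subgroups, every nonzero class of $H^{n+1}$ dies upon restriction to a small enough open subgroup, so each time $H^{n+1}(U_k,\Z/p)\neq 0$ one can descend one more stage and increase the accumulated defect by at least $1$; if instead some open subgroup $U_0$ has $H^{n+1}(U_0,\Z/p)=0$, then below $U_0$ the truncated characteristic is the full one and is exactly multiplicative, so boundedness forces $D(G,U_0)=0$ and hence $H^{n+1}(G,\Z/p)$ injects into $H^{n+1}(U_0,\Z/p)=0$, giving $cd(G)\leq n$ directly and without any torsion-freeness assumption. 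This is the argument your sketch is circling around but never lands on; in particular the finiteness of $H^i(G,\Z/p)$ for $i\leq n$ enters only to make $\chi_n$ well defined, not through spectral-sequence differentials.
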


We will apply Proposition \ref{SchmidtEuler} for $n=2$. In that case, $\chi_2(G)$ is intimately related to the $\Z_p$-rank of $G^{ab}$. Let us write $$G^{ab} \simeq \Z_p^t \oplus \T,$$ where $\T$ is the torsion subgroup of $G^{ab}$.
Recall the following well-known result.
\begin{prop} \label{prop2}
 One has $$\chi_2(G)=1+d_pH_2(G,\Z_p)-t.$$
 Moreover, the  group $G$ is free pro-$p$ if and only if $H^2(G,\Q/\Z)=0$ and $\T=1$.
\end{prop}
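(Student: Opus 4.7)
The plan is to compute the three terms of $\chi_2(G)$ and to reduce everything to the single identity
\[
d_p H^2(G, \Z/p) \;=\; d_p H_2(G, \Z_p) + d_p \T.
\]
We have $d_p H^0(G, \Z/p) = 1$ trivially, and $d_p H^1(G, \Z/p) = d_p G = d_p G^{ab} = t + d_p \T$ since $G^{ab} \simeq \Z_p^t \oplus \T$. Substituting, the displayed identity gives
\[
\chi_2(G) = 1 - (t + d_p \T) + (d_p H_2(G, \Z_p) + d_p \T) = 1 + d_p H_2(G, \Z_p) - t.
\]

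To establish the identity, I would apply the long exact sequence in (continuous) homology attached to $0 \to \Z_p \xrightarrow{\cdot p} \Z_p \to \Z/p \to 0$, which yields
\[
0 \to H_2(G, \Z_p)/p \to H_2(G, \Z/p) \to H_1(G, \Z_p)[p] \to 0,
\]
and $H_1(G, \Z_p) = G^{ab} \simeq \Z_p^t \oplus \T$ gives $H_1(G, \Z_p)[p] = \T[p]$. Since $\T$ is a finite abelian $p$-group, $d_p \T[p] = d_p \T$, and combining with Pontryagin duality $d_p H^2(G, \Z/p) = d_p H_2(G, \Z/p)$ proves the identity. Alternatively, one can use the 5-term exact sequence $0 \to H_2(G, \Z_p) \to R/[F,R] \to F^{ab} \to G^{ab} \to 0$ attached to a minimal pro-$p$ presentation $1 \to R \to F \to G \to 1$: the kernel of $F^{ab} \simeq \Z_p^d \twoheadrightarrow G^{ab}$ is a free $\Z_p$-module of rank $d_p \T$, so the sequence splits, and the number of relations $r = d_p R/R^p[F,R] = d_p H_2(G, \Z_p) + d_p \T$ can be read off directly.

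For the second assertion, $G$ is free pro-$p$ if and only if $\mathrm{cd}(G) \le 1$, if and only if $H^2(G, \Z/p) = 0$. Via the identity just proved this is equivalent to both $H_2(G, \Z_p)/p = 0$ and $\T/p = 0$. Since $H_2(G, \Z_p)$ is a finitely generated $\Z_p$-module, Nakayama's lemma upgrades $H_2(G, \Z_p)/p = 0$ to $H_2(G, \Z_p) = 0$; by Pontryagin duality $H_2(G, \Z_p)^\vee \simeq H^2(G, \Q_p/\Z_p) = H^2(G, \Q/\Z)$ (the last equality since $G$ is pro-$p$), so this is exactly $H^2(G, \Q/\Z) = 0$. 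Simultaneously $\T/p = 0$ forces $\T = 1$ by finiteness, and the converse direction is immediate by reversing the chain. The main subtlety to get right is the disentangling of the two torsion contributions—one from $\T \subset G^{ab}$, one from possible $\Z_p$-torsion inside $H_2(G, \Z_p)$—since the long exact sequence interlaces them, and only Nakayama applied to the correct piece cleanly separates them.
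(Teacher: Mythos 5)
Your proof is correct and follows essentially the same route as the paper: both hinge on the short exact sequence $0 \to H_2(G,\Z_p)/p \to H_2(G,\Z/p) \to H_1(G,\Z_p)[p] \to 0$ coming from $0 \to \Z_p \xrightarrow{p} \Z_p \to \Z/p \to 0$, the identification $H_1(G,\Z_p)\simeq G^{ab}$, and duality between homology and cohomology. You merely spell out the details (and add an optional detour via the five-term sequence) that the paper leaves implicit.
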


\begin{proof}
By taking the $G$-homology of the exact sequence $0 \longrightarrow \Z_p \stackrel{p}{\longrightarrow} \Z_p \longrightarrow \Z/p \longrightarrow 0$, we obtain the following exact sequence
 
 $$ 0 \longrightarrow H_2(G,\Z_p)/p \longrightarrow H_2(G,\Z/p) \longrightarrow H_1(G,\Z_p)[p] \longrightarrow 0. $$
 Both claims follow from the isomorphism $H_1(G,\Z_p)\simeq G^{ab}$ and the duality between cohomology and homology groups.
\end{proof}

\subsection{On the pro-$p$ groups $G_S^T$} \label{section_GST}

Let $K$ be a number field, and $S,T$ be two  finite disjoint sets of primes of $K$.  In this work, we will assume that $S$ consists only of finite places.
Set
\begin{itemize}
\item[$\bullet$] $S_p$ : the set of primes of $K$ above $p$, $S_p'=S\cap S_p$, and $\delta_S:=\delta_{S_p'}:=\underset{\p\in S_p'}{\sum}[K_\p:\Q_p]$,
    \item[$\bullet$] $E^T:=E_K^T$ the \textit{pro-$p$ completion} of the  group of $T$-units of  $K$,
  \item[$\bullet$]  $K_\p$ the completion of $K$ at $\p|p$, $U_\p$ the group of units of $K_\p$,
\item[$\bullet$]  $\displaystyle{\U_{S}:=\prod_{\p \in S_p'}\U_\p}$, 
and $\displaystyle{\U_\p:= \lim_{\stackrel{\longleftarrow}{n}} U_\p/U_\p^{p^n}}$  the pro-$p$ completion of  $U_\p$,
\item[$\bullet$] $\delta:=\delta_{K,p}=1$ (resp. $\delta_\p=1$) if $\zeta_p \in K$ (resp. $\zeta_p \in K_\p$), $0$ otherwise,
\item[$\bullet$] For every $\p \in S \backslash S_p$, we assume that $\delta_\p=1$,
\item[$\bullet$] $\varphi:=\varphi_S^T : E^T  \to \U_S$   the diagonal embedding of $E^T$ into $\U_S$,
\item[$\bullet$] $V_S^T=\{x \in K^\times \, | \, v_\p(x)\equiv 0 \mod p \ \forall \p \notin T\ \& \ x\in K_\p^{\times \, p} \ \forall \p \in S\}$ where $v_{\mathfrak{p}}(x)$ denotes the discrete valuation of $x$ at $\mathfrak{p}$, 
\item[$\bullet$]  $K_S^T/K$  the maximal pro-$p$ extension of $K$ unramified outside $S$ and completely decomposed at $T$;  $G_S^T:=G_{K,S}^T:=Gal(K_S^T/K)$,
\item[$\bullet$] $\T_S$ the torsion part of $G_S^{ab}$ (here $T=\emptyset$),
\item[$\bullet$] If $L/K$ is a finite extension, by abuse we still denote  $S:=S_L:=\{\P|\p, \p \in S\}$. 
\end{itemize}

\medskip

The pro-$p$ group $G_{S}^T$ is well-known to be finitely presented.  More precisely, one has  $$d_p G_{S}^T = 1+ \underset{\mathfrak{p} \in S}{\sum} \delta_{\mathfrak{p}} -\delta   + d_p V_S^T/K^{\times \, p}+ \delta_S  -(r_1+r_2+|T|) $$
and
$$ d_p H^2(G_{S}^T,\Z/p) \leq \underset{\mathfrak{p} \in S}{\sum} \delta_{\mathfrak{p}} - \delta + d_p V_S^T/K^{\times \, p} + \theta,$$
where $\theta$ is equal to $1$ if $\zeta_p\in K$ and $S= \emptyset$, and zero in all other cases.
(See \cite[Chapter X, Theorem 10.7.10]{NSW}.)

\medskip

Therefore, we have the inequality $$ \chi_2(G_S^T) \leq \theta+ r_1+r_2+|T|-\delta_S. $$
In particular under the assumption $(C)$, one has
\begin{equation}\label{relations} \chi_2(G_S^T) \leq  0. \end{equation}

From the above explicit formulae of Shafarevich and Koch, we also have the following fact on the Schur multiplicator $H_2(G_S^T,\Z_p)$ of $G_{S}^T$ (cf. Lemme 3.1 of \cite{Maire-JTNB}). 

\begin{lemm} \label{lemm_bounded_H2}
The $p$-rank of $H_{2}(G_{S}^{T}, \Z_p)$ is bounded above by $\theta + \mathrm{rk}_{\Z_p}ker (\varphi_{S}^{T})$.
\end{lemm}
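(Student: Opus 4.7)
The plan is to combine three ingredients: the Schur multiplicator formula of Proposition \ref{prop2}, the Shafarevich--Koch formulas for $d_p G_S^T$ and $d_p H^2(G_S^T, \Z/p)$ recalled in Section \ref{section_GST}, and a class field theoretic computation of the $\Z_p$-rank of $(G_S^T)^{ab}$. Setting $t := \mathrm{rk}_{\Z_p}(G_S^T)^{ab}$, Proposition \ref{prop2} converts the target quantity into
$$d_p H_2(G_S^T,\Z_p) \;=\; \chi_2(G_S^T) + t - 1,$$
so the task is to bound the right-hand side by $\theta + \mathrm{rk}_{\Z_p}\ker(\varphi_S^T)$.

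The first step is to bound $\chi_2(G_S^T) = 1 - d_p G_S^T + d_p H^2(G_S^T, \Z/p)$ using the explicit equality for $d_p G_S^T$ and the Koch upper bound for $d_p H^2(G_S^T, \Z/p)$ from Section \ref{section_GST}. In the difference, the common terms $\sum_{\p \in S}\delta_\p - \delta + d_p V_S^T/K^{\times p}$ cancel exactly, and one recovers the inequality
$$\chi_2(G_S^T) \leq \theta + r_1 + r_2 + |T| - \delta_S,$$
which was already recorded as (\ref{relations}) under the additional hypothesis $(C)$, but which holds unconditionally.

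The second step is to compute $t$. By class field theory, $(G_S^T)^{ab}$ fits in an exact sequence
$$E^T \xrightarrow{\;\varphi_S^T\;} \U_S \longrightarrow (G_S^T)^{ab} \longrightarrow A \longrightarrow 1,$$
where $A$ is a finite $p$-group (a pro-$p$ quotient of a $T$-ray class group associated with $S$). Since $\mathrm{rk}_{\Z_p}\U_S = \delta_S$ and the Dirichlet unit theorem applied to $T$-units gives $\mathrm{rk}_{\Z_p} E^T = r_1 + r_2 + |T| - 1$, taking $\Z_p$-ranks in the sequence yields
$$t \;=\; \delta_S - \bigl(r_1 + r_2 + |T| - 1\bigr) + \mathrm{rk}_{\Z_p}\ker\varphi_S^T.$$
Substituting this expression and the bound from the first step into the formula $d_p H_2(G_S^T,\Z_p) = \chi_2(G_S^T) + t - 1$, the terms $\delta_S$, $r_1$, $r_2$, $|T|$ and the constants cancel, leaving precisely $\theta + \mathrm{rk}_{\Z_p}\ker(\varphi_S^T)$.

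The calculation is therefore essentially bookkeeping once the three inputs are in place. The only nontrivial verification is the class field theoretic exact sequence in the mixed setting of ramification restricted to $S$ and complete splitting at $T$: one must check that the cokernel $A$ is indeed finite and that the left-hand map is exactly $\varphi_S^T$, so that the rank count uses $T$-units (rank $r_1+r_2+|T|-1$) rather than ordinary units. This is standard (see e.g.\ \cite[Chap.~X]{NSW} or \cite{gras}), and is the only place where the splitting set $T$ enters the estimate in a nontrivial way.
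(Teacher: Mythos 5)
Your proof is correct and takes essentially the same route as the paper's: Proposition \ref{prop2} to rewrite $d_p H_2(G_S^T,\Z_p)$ as $\chi_2(G_S^T)+\mathrm{rk}_{\Z_p}G_S^{T,ab}-1$, the Shafarevich--Koch formulas to get the unconditional bound $\chi_2(G_S^T)\leq \theta+r_1+r_2+|T|-\delta_S$, and the class field theoretic identity $\mathrm{rk}_{\Z_p}G_S^{T,ab}=\delta_S-(r_1+r_2+|T|-1)+\mathrm{rk}_{\Z_p}\ker(\varphi_S^T)$. The paper's proof is simply a terser version of the same bookkeeping.
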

\begin{proof}
By Proposition \ref{prop2} and the formulae of Shafarevich and Koch, we have the inequality $$d_p H_2(G_{S}^T,\Z_p) = \chi_2(G_S^T) -1 + \mathrm{rk}_{\Z_p}G_{S}^{T,ab} \leq -1-\delta_S+r_1+r_2+|T|+\theta + \mathrm{rk}_{\Z_p}G_S^{T,ab}. $$
The claim follows from the equality $\mathrm{rk}_{\Z_p} G_{S}^{T,ab} = \delta_S- (r_1+r_2+|T|-1) + \mathrm{rk}_{\Z_p}ker(\varphi_{S}^T)$.
\end{proof}

We study $G_{S_p}^{T}$ by considering it as a quotient of $G_{S_p}$ by the (normal subgroup generated by the) Frobenius automorphisms at the primes of $K_{S_p}$ above $T$. The key idea of the proof of Theorem~\ref{TheoremB} is as follows: for any finite quotient $G$ of $G_{S_p}$, we can use Chebotarev density theorem to find some primes whose Frobenius restrict to any prescribed elements of $G$.
Let us recall relatively strong properties of $G_{S_p}$. See \cite[Proposition 8.3.18, Corollary 8.7.5, and Corollary 10.4.8]{NSW}.

\begin{theo} \label{theo_GS} Suppose that $S$ contains $S_p$ and assume that $K$ totally imaginary if $p=2$.
 The pro-$p$ group $G_{S}$ has cohomological dimension $1$ or $2$. Moreover, we have
 $\chi_2(G_S)=-r_2$.
\end{theo}
To make our strategy of using Chebotarev density theorem as easy as possible, it is nice to consider the case when $G_{S}$ is free pro-$p$.
   Observe that if $G_S$ is free pro-$p$ then there is no tame ramification in $K_S/K$.

  \begin{prop}\label{prop_free} Let $K$ be a number field and $S$ a finite set of places of $K$. If $ker(\varphi_{S})=1$ and $\T_{S}=1$, then $G_{S}$ is free pro-$p$. The converse is also true if $S=S_p$. Furthermore, we have $d_p G_S = 1+\delta_S-(r_1+r_2)$.
   \end{prop}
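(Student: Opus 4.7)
The plan is to apply Proposition \ref{prop2}, which characterizes free pro-$p$ groups by $H^{2}(-,\Q/\Z)=0$ together with torsion-freeness of the abelianization.

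For the forward direction, I would first observe that the hypothesis $ker(\varphi_S)=1$ automatically forces $\theta=0$: the only case giving $\theta=1$ is $S=\emptyset$ with $\zeta_p\in K$, but then $\varphi_\emptyset$ has trivial target and $ker(\varphi_\emptyset)=E^\emptyset\supset\langle\zeta_p\rangle$, contradicting the hypothesis. Lemma \ref{lemm_bounded_H2} (with $T=\emptyset$) then yields $d_pH_2(G_S,\Z_p)\le\theta+\mathrm{rk}_{\Z_p}ker(\varphi_S)=0$. The short exact sequence appearing in the proof of Proposition \ref{prop2}, combined with $\T_S=1$, upgrades this to $H_2(G_S,\Z/p)=0$, whence $H^{2}(G_S,\Q/\Z)=0$ by Pontryagin duality and a Bockstein argument. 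Proposition \ref{prop2} then concludes that $G_S$ is free pro-$p$. The rank identity $\mathrm{rk}_{\Z_p}G_S^{ab}=\delta_S-(r_1+r_2-1)+\mathrm{rk}_{\Z_p}ker(\varphi_S)$ from the proof of Lemma \ref{lemm_bounded_H2}, combined with $d_pG_S=\mathrm{rk}_{\Z_p}G_S^{ab}$ (using freeness) and the hypothesis $ker(\varphi_S)=1$, gives $d_pG_S=1+\delta_S-(r_1+r_2)$.

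For the converse when $S=S_p$, I would run the argument backwards through the Euler--Poincar\'e characteristic. If $G_{S_p}$ is free pro-$p$ of rank $d$, then $G_{S_p}^{ab}\simeq\Z_p^{d}$ is torsion-free (hence $\T_{S_p}=1$), and $\chi_2(G_{S_p})=1-d$. By Theorem \ref{theo_GS}, $\chi_2(G_{S_p})=-r_2$, forcing $d=r_2+1$. Since $\delta_{S_p}=[K:\Q]=r_1+2r_2$, the rank identity becomes $r_2+1=d=r_2+1+\mathrm{rk}_{\Z_p}ker(\varphi_{S_p})$, so $\mathrm{rk}_{\Z_p}ker(\varphi_{S_p})=0$.

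To promote this to $ker(\varphi_{S_p})=1$, one notes that any nontrivial $p$-power root of unity in $K$ lies in each completion $K_\p$ for $\p\mid p$ and has nontrivial image in the pro-$p$ completion $\U_\p$; hence $ker(\varphi_{S_p})$ is torsion-free, and being a rank-zero $\Z_p$-submodule of $E$, it is trivial. I expect this distinction between \emph{rank zero} and \emph{trivial} to be the subtlest step, together with the need to restrict to $S=S_p$: Theorem \ref{theo_GS} (which requires $K$ totally imaginary when $p=2$) is what pins $\chi_2$ down exactly, and without that identity the back-substitution into the rank formula would not close.
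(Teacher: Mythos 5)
Your proof is correct and follows essentially the same route as the paper: Proposition \ref{prop2} together with Lemma \ref{lemm_bounded_H2} for the forward direction, and the identity $\chi_2(G_{S_p})=-r_2$ from Theorem \ref{theo_GS} to force $d_pH_2(G_{S_p},\Z_p)=\mathrm{rk}_{\Z_p}\ker(\varphi_{S_p})=0$ for the converse. You additionally spell out two points the paper leaves implicit --- that the hypothesis forces $\theta=0$, and that $\ker(\varphi_{S_p})$ of $\Z_p$-rank zero is actually trivial because $p$-power roots of unity inject into $\U_\p$ --- which is a welcome bit of extra care but not a different argument.
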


   \begin{proof}
   This is a consequence of Proposition \ref{prop2} and Lemma \ref{lemm_bounded_H2}.
   Moreover when $S=S_p$, we have $\chi_2(G_{S_p})=-r_2$ (see Theorem \ref{theo_GS}) which implies $$d_p H_2(G_{S_p},\Z_p)=\mathrm{rk}_{\Z_p}ker (\varphi_{S}).$$
Hence in this case, if $G_{S}$ is free pro-$p$, then we have $ker(\varphi_{S})=1$.
   \end{proof}

Under the Leopoldt conjecture, $G_{S_p}$ is free pro-$p$  if and only if $\T_{S_p}=1$. Even though the freeness of $G_{S_p}$ seems to be strong, it is believed to be a common phenomenon. In particular, we have the following conjecture.


\begin{conj}[Gras \cite{Gras-CJM}] \label{conjecture_gras} Given a number field $K$, then $\T_{S_p}=1$ for $p\gg 0$. 
\end{conj}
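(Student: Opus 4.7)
This is the open Gras conjecture on $p$-rationality, so any proof plan is necessarily aspirational; let me describe the framework into which I would try to place it. The key translation, via class field theory, is that $\T_{S_p}=1$ is equivalent (granting the Leopoldt conjecture for $K$ at $p$) to the triviality of a specific $p$-class module fitting into an exact sequence involving (a) the $p$-part of the ordinary class group $\mathrm{Cl}(K)$, (b) the torsion in the cokernel of the diagonal embedding $\varphi_{S_p}\colon E^{\emptyset}\to \U_{S_p}$ modulo its $\Z_p$-free part, and (c) the Leopoldt defect $\mathrm{rk}_{\Z_p}\ker(\varphi_{S_p})$. Item (a) is trivially handled: $\mathrm{Cl}(K)$ is finite, so its $p$-part vanishes for $p\gg 0$. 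Item (c) is Leopoldt's conjecture — known for abelian $K$ by Brumer, open in general. The essential content therefore lies in (b).

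For (b) I would pursue two complementary routes. The Iwasawa-theoretic route would proceed along the cyclotomic $\Z_p$-extension $K_\infty/K$: Ferrero--Washington gives $\mu=0$ for abelian $K$, and for the maximal totally real subfield Greenberg's conjecture predicts $\lambda=0$; once both hold, a standard co-descent argument forces $\T_{S_p}=1$ as soon as $p$ exceeds the finitely many primes appearing in the transition data between layers and in the characteristic ideal of the cyclotomic Iwasawa module. The probabilistic route would instead invoke a Cohen--Lenstra--Malle heuristic: the finite quotients controlling $\T_{S_p}$ should behave like cokernels of random $\Z_p$-matrices, yielding triviality for density-one sets of~$p$ and, combined with reasonable tail estimates, for all but finitely many $p$ once $K$ is fixed.

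The central obstacle is that each pillar above — Leopoldt, Greenberg, a strong enough Cohen--Lenstra--Malle heuristic — is itself wide open. Even conditionally, controlling the \emph{finitely many} exceptional primes requires a quantitative lower bound on the normalized $p$-adic regulator $R_p(K)/\sqrt{|d_K|}$, which would in turn demand new transcendence input going beyond Brumer--Baker. In the paper itself the conjecture is invoked only to motivate the abundance of $p$-rational fields; the concrete examples (such as $K=\Q(\zeta_8)$ in Theorem \ref{TheoremB}) are verified by direct computation in PARI/GP via the criterion of Proposition \ref{prop_free}, which bypasses the conjecture altogether.
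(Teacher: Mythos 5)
The statement you were asked about is not a theorem of the paper: it is an open conjecture of Gras, quoted from \cite{Gras-CJM}, and the paper offers no proof of it whatsoever --- it is invoked only as motivation (to suggest that $p$-rational fields are plentiful) and as a hypothesis under which the statistics of Proposition \ref{lowerbound} hold for ``generic'' pairs $(K,p)$. So there is nothing in the paper to compare your argument against, and your closing paragraph correctly diagnoses the role the conjecture plays here. Your proposal is appropriately honest that it is a framework rather than a proof, and the skeleton is sound: the standard exact sequence does split $\T_{S_p}$ into the $p$-part of $\mathrm{Cl}(K)$ (trivial for $p\gg 0$ by finiteness), the Leopoldt defect, and a normalized $p$-adic regulator term, and the entire difficulty is concentrated in the last term. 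Two small calibrations: Gras's own heuristic in \cite{Gras-CJM} is precisely your ``probabilistic route'' applied to the $p$-adic valuation of the normalized regulator (a Borel--Cantelli--type argument giving finitely many exceptional $p$ for fixed $K$), so that route is closer to the source than the Iwasawa-theoretic one; and the Iwasawa-theoretic route as you describe it proves something along the cyclotomic tower for a \emph{fixed} $p$, which does not by itself address the ``for $p\gg 0$'' quantifier over varying $p$ that is the actual content of the conjecture. Neither point is a gap in a proof, since no proof exists; but if you cite this statement in your own work, cite it as a conjecture.
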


To be complete, let us recall that when $G_{S_p}$ is  free pro-$p$, then $K$ is said to be {\it $p$-rational} (\cite{Movahhedi-phd}).


\smallskip

We finish this subsection with a well-known fact on the $\Z_p$-rank of $G_S^T$ \cite{gras}. By class field theory, the $\Z_p$-rank of the abelianization of $G_{K,S}^{T}$ is equal to the $\Z_p$-rank of the cokernel of the diagonal map $\varphi: E^{T} \to \U_S$. If $K/\Q$ is Galois, then considering Galois actions of $Gal(K/\Q)$ is useful as in the following lemma which will be important in Theorem B.

\begin{lemm}\label{biquadraticT}
Let $K/\Q$ be an imaginary biquadratic field. Let $K^+$ be its real quadratic subfield. Let $T$ be a {\it non-empty} finite   set of non-$p$ primes of $K$. If the primes of $T$ are fixed by $Gal(K/K^+)$, then the $\Z_p$-rank of $G_{K,S_p}^{T}$ is $2$.
\end{lemm}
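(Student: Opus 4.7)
The plan is to use the $H := Gal(K/K^+) = \langle \tau \rangle$-action on the diagonal map $\varphi = \varphi_{S_p}^T : E^T \to \U_{S_p}$ from class field theory, whose cokernel controls $\mathrm{rk}_{\Z_p} G_{K,S_p}^{T,ab}$, and then to reduce the problem to an analogous statement for $K^+$ where Leopoldt's conjecture is available. Throughout I assume $p$ is odd, so that $\Q_p[H]$ is semisimple; this is the relevant setting for Theorem \ref{TheoremB}.

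The first step is to show that $\varphi \otimes \Q_p$ takes values in the $H$-invariants of $\U_{S_p} \otimes \Q_p$, which have $\Q_p$-dimension $2$. On the source side, the exact sequence $0 \to E_K \otimes \Q_p \to E^T \otimes \Q_p \to \bigoplus_{\p \in T} \Q_p \to 0$ has trivial $H$-action on both ends: on the left because $E_K/(\mu_K E_{K^+})$ is finite for the CM pair $K/K^+$, and on the right because $T$ is pointwise fixed by $H$. By semisimplicity, $E^T \otimes \Q_p$ is itself $H$-invariant. On the target, the normal basis theorem yields $K \otimes_\Q \Q_p \simeq \Q_p[G]$, hence $\simeq \Q_p[H]^2$ as $\Q_p[H]$-module, so that $(\U_{S_p} \otimes \Q_p)^H$ has $\Q_p$-dimension $2$. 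Combined with the rank identity $\mathrm{rk}_{\Z_p} G_{K,S_p}^{T,ab} = 4 - \mathrm{rk}_{\Z_p} \mathrm{im}\,\varphi$ (which follows from Lemma \ref{lemm_bounded_H2} using $\delta_{S_p} = 4$ and $r_1 + r_2 = 2$), this already gives $\mathrm{rk}_{\Z_p} G_{K,S_p}^{T,ab} \geq 2$.

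For the matching upper bound I would identify $(\U_{S_p} \otimes \Q_p)^H$ with the local unit space of $K^+$ at $T^+ := \{\p \cap K^+ : \p \in T\}$, and the $H$-invariants of $E^T$ with $E_{K^+}^{T^+}$; the hypothesis that each $\p \in T$ is $H$-fixed forces $\p$ to be inert or ramified in $K/K^+$, so $T \leftrightarrow T^+$ is a bijection and both $(E^T)^H$ and $E_{K^+}^{T^+}$ have $\Z$-rank $1 + |T|$. Since $(E^T \otimes \Q_p)^H = E^T \otimes \Q_p$, the map $\varphi \otimes \Q_p$ coincides under these identifications with the analogous map $\varphi_{K^+, S_p^+}^{T^+} \otimes \Q_p$, so proving $\mathrm{rk}_{\Z_p} \mathrm{im}\,\varphi = 2$ reduces to showing that $G_{K^+, S_p^+}^{T^+, ab}$ is finite.

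The final step uses Leopoldt for $K^+$---known by Brumer since $K^+$ is abelian over $\Q$---which gives $\mathrm{rk}_{\Z_p} G_{K^+, S_p^+}^{ab} = r_2(K^+) + 1 = 1$, the unique $\Z_p$-quotient being the cyclotomic $\Z_p$-extension $K^+ \cdot \Q_\infty^{\mathrm{cyc}}/K^+$. The quotient $G_{K^+, S_p^+}^{T^+, ab}$ is then finite as soon as some $\mathrm{Frob}_{\p^+}$, $\p^+ \in T^+$, has non-torsion image in $Gal(K^+ \Q_\infty^{\mathrm{cyc}}/K^+) \hookrightarrow \Z_p$, i.e.\ $\ell^{f(\p^+/\ell)} \notin \mu_{p-1}$ under the cyclotomic character (with $\ell = \p^+ \cap \Z$). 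The main point---and the main arithmetic input beyond Leopoldt and class field theory---is that $\log_p \ell \neq 0$ for every prime $\ell \neq p$: indeed $\ell^{p-1} - 1$ is a nonzero integer ($\pm 1$ being the only integer roots of unity), so it has finite $p$-adic valuation and $\ell \notin \mu_{p-1}$. This elementary observation is what makes the conclusion uniform in the choice of non-empty $H$-fixed $T$, and is the crucial (though easy) step of the proof.
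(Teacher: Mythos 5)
Your proof is correct and takes essentially the same route as the paper: both arguments confine the image of $\varphi_{S_p}^T \otimes \Q_p$ to the $2$-dimensional trivial isotypic component for $Gal(K/K^+)$ (giving rank at least $2$), and both then get rank exactly $2$ from Leopoldt together with the fact that a non-$p$ prime cannot split completely in the cyclotomic $\Z_p$-extension, your descent of the whole map to $K^+$ being only a repackaging of the paper's direct character count on $K$. The one blemish is the slip where you identify $(\U_{S_p}\otimes\Q_p)^{H}$ with the local unit space of $K^+$ ``at $T^+$''; it should of course be taken at the primes of $K^+$ above $p$.
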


\begin{proof} Let $\1$ be the trivial $\Q_p$-character of $Gal(K/K^+)$, and $\chi$ be the nontrivial character.
The character of the $\Q_p$-representation $\mathcal{U}_{S_p} \otimes_{\Z_p}\Q_p$ is equal to $\1+\1 + 2\chi$. On the other hand, the character of the $T$-units is $(|T|+1) \1$. Hence, the character of the image of $\varphi_{S_p}^T$ is contained in the isotypic component at $\1$. 
It is precisely $\1 + \1$ because for any non-$p$ prime $\p$ of $K$, the $\Z_p$-rank of $G_{K,S_p}^{\{\p\}}$ is strictly smaller than $G_{K,S_{p}}$; $\p$ does not split completely in the cyclotomic $\Z_p$-extension of $K$.
\end{proof}

 \section{Proof of the main results} \label{section_proofs}
 
In this section, we prove the main theorems of this work. They completely give answer to the question of the realizability of analytic groups as $G_{K,S}^{T}$ under the assumption $(C)$.
 
 \subsection{Proof of Theorem \ref{TheoremA}}
 
 \begin{theo}\label{maintheo}
Assuming $(C)$, the pro-$p$ group $G_S^T$ is   a $p$-adic analytic group if and only if it is virtually isomorphic to one of
 $\Z_p$, $\Z_p \rtimes \Z_p$ (noncommutative), and
  $\Z_p \times \Z_p$.
In particular, we have
$\delta_S=r_1+r_2+|T|$.
 \end{theo}
 
 \begin{proof} The proof combines the argument of Proposition 3.3 of \cite{Maire-JTNB} and properties of $p$-adic analytic groups (\cite{DSMN}). Suppose that $G_S^T$ is $p$-adic analytic, then the $p$-rank of open subgroups $U$ of $G_{S}^T$ are uniformly bounded. Hence, the $\Z_p$-ranks of $U$ are also uniformly bounded. If $L$ is the subfield of $K_{S}^T$ fixed by an open subgroup $U$, then we have the following equality
\begin{equation}\label{Zp-rank}
rk_{\Z_p} U^{ab}=[L:K](\delta_S-(r_1+r_2+|T|))+1+rk_{\Z_p} ker(\varphi_{L,S}^T).
\end{equation}
 Hence, if $G_{S}^T$ is $p$-adic analytic, then necessarily we have
 \begin{enumerate}
  \item[$(a)$] $\delta_S=r_1+r_2+|T|$ and,
  \item[$(b)$] the rank of the kernel of $\varphi_S^T$ is bounded along $K_S^T/K$.
 \end{enumerate}
By Lemma \ref{lemm_bounded_H2}, $(b)$ implies that the $p$-rank of $H_{2}(U,\Z_p)$ is uniformly bounded for all open subgroups $U$ of $G_{S}^T$. By Proposition \ref{prop2}, $|\chi_{2}(U)|$ for open subgroups $U$ of $G_{S}^T$ are uniformly bounded. Since $\chi_{2}(G)$ is non-positive by the assumption $(C)$ (see (\ref{relations})), for some sufficiently large integer $N$, we have $\chi_{2}(U) + N \geq (G:U)\chi_{2}(G)$ for all $U$. Therefore, either $G_{S}^T$ is finite or $cd(G_{S}^T) \leq 2$ by Proposition \ref{SchmidtEuler}. By the assumption $(C)$, the pro-$p$ group $G_S^T$  is never finite.
One concludes thanks to  the classification of the $p$-adic analytic groups of dimension $2$.
\end{proof}

Observe that when $G_S^T \simeq \Z_p \rtimes \Z_p$, whether it is commutative or not is related to the behavior of $\ker (\varphi_{L,S}^T)$ for number fields $L$ in $K_S^T/K$.
 
 \begin{prop}
  Suppose that $G_S^T$ is a uniform pro-$p$ group of dimension $2$.
  Then $ \mathrm{rk}_{\Z_p} \ker(\varphi_{L,S}^T) \in \{0, 1\}$ is constant along  $K_S^T/K$. Moreover, $ \mathrm{rk}_{\Z_p}\ker(\varphi_{L,S}^T) =1$ if and only if 
  $G_S^T \simeq \Z_p \times \Z_p$.
 \end{prop}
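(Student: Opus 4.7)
The plan is to pin down $\mathrm{rk}_{\Z_p} U^{ab}$ for every open subgroup $U=Gal(K_S^T/L)$ attached to a finite intermediate extension $L/K$ in $K_S^T$ in two different ways---once arithmetically via (\ref{Zp-rank}), once group-theoretically via the Lie algebra of the analytic group---and to compare them. Since $G_S^T$ is uniform of dimension $2$, hence infinite and $p$-adic analytic, Theorem~\ref{maintheo} applies under assumption $(C)$ and yields $\delta_S=r_1+r_2+|T|$. Consequently the $[L:K]$-term in (\ref{Zp-rank}) collapses, and for every such $L$,
$$\mathrm{rk}_{\Z_p} U^{ab}=1+\mathrm{rk}_{\Z_p}\ker(\varphi_{L,S}^T).$$

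On the group-theoretic side, every open subgroup $U$ of the uniform pro-$p$ group $G_S^T$ is again $p$-adic analytic of dimension $2$ and shares the same $\Q_p$-Lie algebra $\mathfrak{g}$, and one has $\mathrm{rk}_{\Z_p} U^{ab}=\dim_{\Q_p}\mathfrak{g}/[\mathfrak{g},\mathfrak{g}]$ (see \cite{DSMN}). Up to isomorphism there are only two $\Q_p$-Lie algebras of dimension~$2$: the abelian one, for which $\mathfrak{g}/[\mathfrak{g},\mathfrak{g}]$ has dimension~$2$, and the solvable nonabelian one with relation $[x,y]=x$, for which $\mathfrak{g}/[\mathfrak{g},\mathfrak{g}]$ has dimension~$1$. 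Hence $\mathrm{rk}_{\Z_p} U^{ab}$ is constant along $K_S^T/K$: it equals $2$ exactly when $\mathfrak{g}$ is abelian, i.e.\ when $G_S^T\simeq\Z_p\times\Z_p$, and it equals $1$ otherwise.

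Combining the two formulas yields $\mathrm{rk}_{\Z_p}\ker(\varphi_{L,S}^T)\in\{0,1\}$ constant along $K_S^T/K$, with value $1$ if and only if $G_S^T\simeq\Z_p\times\Z_p$. The only delicate point is the invariance of the Lie algebra under passage to an open subgroup and the identification of $\mathrm{rk}_{\Z_p} U^{ab}$ with $\dim_{\Q_p}\mathfrak{g}/[\mathfrak{g},\mathfrak{g}]$; both are standard for uniform pro-$p$ groups, but the argument can be made self-contained by treating the two possibilities for $G_S^T$ by hand. In the nonabelian case, one needs only to verify that every open subgroup of $\Z_p\rtimes\Z_p$ remains nonabelian: any such $U$ contains elements $x^{p^m},y^{p^n}$ whose commutator equals $(x^{p^m})^{\lambda^{p^n}-1}$, which is nontrivial since the action of $y$ on $\langle x\rangle$ is by a unit $\lambda\in 1+p\Z_p\setminus\{1\}$. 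A direct computation of the closure of $[U,U]$ inside $\langle x\rangle\cong\Z_p$ then gives $U^{ab}$ of $\Z_p$-rank~$1$, while in the abelian case $U\simeq\Z_p^2$ gives rank~$2$.
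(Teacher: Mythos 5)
Your proof is correct and follows essentially the same route as the paper: the paper's (very terse) proof likewise combines formula (\ref{Zp-rank}), the conclusion $(a)$ that $\delta_S=r_1+r_2+|T|$ from the proof of Theorem \ref{TheoremA}, and the classification of two-dimensional uniform pro-$p$ groups to read off $\mathrm{rk}_{\Z_p}U^{ab}\in\{1,2\}$ for all open subgroups. Your explicit verification that open subgroups of the nonabelian $\Z_p\rtimes\Z_p$ have abelianization of $\Z_p$-rank $1$ simply fills in a detail the paper leaves to the reader.
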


 \begin{proof}
The claim follows from the classification of uniform pro-$p$ groups of rank $2$, the formula $(\ref{Zp-rank})$, and the conclusion (a) in the proof of Theorem \ref{TheoremA}.
 \end{proof}

 \subsection{Proof of Theorem \ref{TheoremB}}
Now, let us prove that $\Z_p\times \Z_p$ can be realized as a Galois group $G_{K,S_p}^{T}$ for a number field $K$ and a finite set $T$ of primes of $K$. We use the $p$-rational number fields.

Take $p$ odd.
Let $K$ be an imaginary biquadratic $p$-rational field. The existence of such a number field is already known from the works \cite{BenMov,Koperecz}. 
We will take $S=S_p$. Then,~$T$ is necessarily equal to $\{\p,\q\}$ for some non-$p$ primes of $K$ by the conclusion $(a)$ of Theorem~\ref{TheoremA}. Suppose that $\p$ is a non-$p$ prime of $K$ whose Frobenius automorphism $\Fr_\p$ in $G_{S_p}:=G_{K,S_p}$ represents a non-trivial element in the vector space $(G_{S_p})^{p,el} \simeq \mathbb{F}_p^{3}$. Then we have the following easy lemma.

\begin{lemm} \label{lemm_free}  The pro-$p$ group $G_{S_p}^{\{\p\}}$ is free pro-$p$  on $2$ generators.
\end{lemm}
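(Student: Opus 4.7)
The plan is to identify $G_{S_p}^{\{\p\}}$ as a quotient of $G_{S_p}$ by the normal closure of $\Fr_\p$, and then to exploit that $G_{S_p}$ is itself free pro-$p$ of rank $3$ in which $\Fr_\p$ is a primitive element.

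First I would record the numerics: since $K$ is imaginary biquadratic, $r_1=0$, $r_2=2$, and $\delta_{S_p}=[K:\Q]=4$. The $p$-rationality of $K$ means by Proposition \ref{prop_free} that $G_{S_p}$ is free pro-$p$ of rank $1+\delta_{S_p}-(r_1+r_2)=3$, consistent with the given isomorphism $(G_{S_p})^{p,el}\simeq\fq_p^3$. Next, since $\p\notin S_p$, the prime $\p$ is unramified in $K_{S_p}/K$, so its decomposition subgroup in $G_{S_p}$ is topologically generated by $\Fr_\p$. As $K_{S_p}^{\{\p\}}$ is by definition the largest subextension of $K_{S_p}/K$ in which $\p$ splits completely, it is precisely the fixed field of the normal closure $N$ of $\Fr_\p$, giving $G_{S_p}^{\{\p\}}\simeq G_{S_p}/N$.

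The final step is a pure pro-$p$-group argument. Because the image of $\Fr_\p$ in the Frattini quotient $(G_{S_p})^{p,el}$ is non-trivial by hypothesis, the Burnside basis theorem for pro-$p$ groups lets me extend it to a triple $\{\Fr_\p,y_2,y_3\}$ whose images form an $\fq_p$-basis of $(G_{S_p})^{p,el}$; such a triple is automatically a free basis of the free pro-$p$ group $G_{S_p}$. The quotient of a free pro-$p$ group by the normal closure of one element of a free basis is again free pro-$p$, generated by the remaining basis elements (by matching universal properties). Hence $G_{S_p}^{\{\p\}}$ is free pro-$p$ on $\{y_2,y_3\}$, i.e.\ of rank~$2$. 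No serious obstacle is expected; each step is routine once the free presentation of $G_{S_p}$ and the local description of $\p$ are in place.
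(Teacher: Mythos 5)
Your argument is correct and is exactly the standard justification for this statement, which the paper itself leaves unproved (it is introduced only as an ``easy lemma''): identify $G_{S_p}^{\{\p\}}$ with the quotient of the free pro-$p$ group $G_{S_p}$ of rank $3$ by the normal closure of $\Fr_\p$, note that the non-triviality of $\Fr_\p$ in $(G_{S_p})^{p,el}$ lets you complete it to a free basis, and conclude that the quotient is free on the remaining two generators. All three steps (the rank computation via Proposition \ref{prop_free}, the identification of complete splitting at the unramified prime $\p$ with killing the conjugacy class of $\Fr_\p$, and the universal-property argument for the quotient) are sound, so your write-up supplies precisely the omitted proof.
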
 

If $\q$ is a non-$p$ prime of $K$ distinct from $\p$, then for the set $T=\{\p, \q\}$, $G_{S_p}^{T}$ is a one relator pro-$p$ group of   rank $2$ unless it is isomorphic to $\Z_p$.

A main difficulty in proving $G_{S_p}^{T} \simeq \Z_p\times \Z_p$ is that we cannot apply Chebotarev density theorem in an infinite Galois extension. However, if $G_{S_p}^T$ is already known to be a one relator pro-$p$ group, then we can use Chebotarev density theorem for $\q$ in a finite quotient of $G_{S_p}^{\{\p\}}$ to guarantee that $G_{S_p}^{T}$ is a Demushkin pro-$p$ group.
 
 \begin{prop}\label{Zp^2criteria} Let $S$ and $T$ be disjoint and finite sets of primes of $K$ such that $\delta_S=r_1+r_2+|T|$. Suppose that $(G_{K,S}^T)^{ab}\simeq \Z_p\times \Z_p$. Let $K_1,\cdots, K_{p+1}$ be the $p+1$ degree-$p$ extensions of $K$ in $K_S^T/K$. Then $G_{K,S}^T \simeq \Z_p\times \Z_p$ if and only if $$d_p G_{K_1,S}^T=\cdots =d_p G_{K_{p+1},S}^T=2.$$
 \end{prop}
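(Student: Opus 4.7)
If $G := G_{K,S}^T \simeq \Z_p \times \Z_p$, then every open subgroup of $G$ is a finite-index subgroup of a torsion-free $\Z_p$-module of rank $2$, hence itself isomorphic to $\Z_p \times \Z_p$, and so has $d_p = 2$. Via the Galois correspondence (using that $K_S^T$ is also the maximal pro-$p$ extension of each $K_i$ unramified outside $S_{K_i}$ and totally split at $T_{K_i}$, a standard consequence of maximality), each $G_{K_i,S}^T$ is the index-$p$ subgroup of $G$ corresponding to $K_i$, so $d_p G_{K_i,S}^T = 2$.

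\textbf{Reduction to the one-relator case.} Assume $d_p G_{K_i,S}^T = 2$ for every $i$. My plan is to show that $G$ is a Demushkin pro-$p$ group of rank $2$ and conclude by Serre's classification. The Shafarevich--Koch formulas combined with $\delta_S = r_1 + r_2 + |T|$ and $d_p G = 2$ give
$$d_p H^2(G, \Z/p) \; \leq \; d_p G - 1 + \theta \; = \; 1 + \theta.$$
Since $r_1 + r_2 \geq 1$ for any number field, $\delta_S \geq 1$, which forces $S \cap S_p \neq \emptyset$, so $\theta = 0$ and $d_p H^2(G, \Z/p) \leq 1$. If this bound were $0$, Proposition \ref{prop2} together with torsion-freeness of $G^{ab} = \Z_p^2$ would make $G$ free pro-$p$ of rank $2$; but then Schreier's formula would give $d_p M_i = p+1 > 2$, a contradiction. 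Thus $d_p H^2(G, \Z/p) = 1$, and $G = F/\langle\langle r \rangle\rangle$ is a one-relator pro-$p$ group with $F$ free pro-$p$ on generators $x, y$; the equality $G^{ab} = \Z_p^2 = F^{ab}$ forces $r \in [F, F]$.

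\textbf{Non-degeneracy of the relation.} Denote by $(F_n)_n$ the Zassenhaus filtration of $F$. For $p$ odd, $F_2/F_3 \simeq \F_p \cdot [x,y]$, so $r \equiv a[x,y] \pmod{F_3}$ for some $a \in \F_p$. I claim $a \neq 0$, equivalently that the cup product $H^1(G, \F_p)^{\otimes 2} \to H^2(G, \F_p)$ is non-degenerate, i.e., $G$ is Demushkin of rank $2$. Suppose instead $r \in F_3$; modulo $F_4$ we may write $r \equiv \alpha [[x,y],x] + \beta [[x,y],y] \pmod{F_4}$, with additional terms $\gamma x^p, \delta y^p$ if $p=3$. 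A Reidemeister--Schreier computation on each of the $p+1$ index-$p$ subgroups $\tilde M \subset F$ yields the images in $\tilde M^{p,el} \simeq \F_p^{p+1}$ of the $p$ $F$-conjugates of $r$: for the subgroup killing $y$ (resp.\ $x$) in $F^{p,el}$, the generator $[[x,y],y]$ (resp.\ $[[x,y],x]$) projects to $0$ in the abelianization, while the orbit of the other generator gives the circulant tridiagonal $(-1,2,-1)$ matrix of rank $p-2$; and on the remaining ``diagonal'' subgroups, the image of any element of $F_3$ is $\sigma$-invariant modulo $p$ and therefore spans at most one dimension. A case analysis on $(\alpha, \beta, \gamma, \delta)$ shows that for at least one of the $p+1$ subgroups one has $\dim \overline R < p-1$, whence $d_p M_i = (p+1) - \dim \overline R \geq 3$, contradicting the hypothesis. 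Hence $a \neq 0$, and $G$ is Demushkin of rank $2$.

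\textbf{Conclusion.} For $p$ odd, Serre's classification of Demushkin pro-$p$ groups of rank $2$ gives $G \simeq \langle x, y \mid x^q [x, y] \rangle$ with $q \in \{p^n\}_{n \geq 1} \cup \{\infty\}$ and $G^{ab} \simeq \Z_p \oplus \Z/q$; the hypothesis $G^{ab} \simeq \Z_p \times \Z_p$ forces $q = \infty$, so $G \simeq \Z_p \times \Z_p$. (For $p = 2$ the $2$-adic Demushkin classification adds a bounded list of cases, each of which has torsion in its abelianization and is ruled out analogously.) The main obstacle in this plan is the non-degeneracy step: the uniform case analysis across the $p+1$ choices of $\tilde M$, which is slightly delicate for $p = 3$ due to the extra generators $x^3, y^3$ appearing in $F_3/F_4$.
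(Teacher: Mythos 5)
Your overall skeleton matches the paper's: forward direction by Galois correspondence, then (for the converse) Schreier's formula to rule out freeness, the Shafarevich--Koch bound $d_p H^2(G_{K,S}^T,\Z/p)\leq 1$ to get a one-relator presentation, the identification of $G_{K,S}^T$ as a Demushkin group of rank $2$, and finally the classification of Demushkin groups to pin down $\Z_p\times\Z_p$ from the abelianization. Your observation that $\delta_S=r_1+r_2+|T|\geq 1$ forces $S\cap S_p\neq\emptyset$ and hence $\theta=0$ is a correct and welcome detail that the paper leaves implicit.

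The genuine gap is in your ``non-degeneracy of the relation'' step. The paper disposes of this in one line by citing the cohomological characterization of Demushkin groups (\cite[Chapter III, Theorem 3.9.15]{NSW}, essentially the Dummit--Labute criterion): a one-relator pro-$p$ group $G$ with $d_p G=2$ is Demushkin if and only if $d_p U=2$ for every open subgroup $U$ of index $p$, which is exactly your hypothesis on the $K_i$. You instead attempt to reprove this special case by hand: assuming $r\in F_3$, you claim a Reidemeister--Schreier computation over the $p+1$ index-$p$ subgroups produces some subgroup with $d_p\geq 3$. But the decisive assertions are not established --- the rank of the claimed circulant $(-1,2,-1)$ matrix, the statement that images of elements of $F_3$ in the ``diagonal'' subgroups are ``$\sigma$-invariant modulo $p$ and therefore span at most one dimension,'' and above all the concluding ``case analysis on $(\alpha,\beta,\gamma,\delta)$'' which is announced but never carried out; you yourself flag the $p=3$ case (where $x^3,y^3$ survive in $F_3/F_4$) as unresolved. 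As written, the argument does not prove that $r\not\equiv 1 \pmod{F_3}$, so the cup product's non-degeneracy --- the heart of the converse --- is not established. The repair is simply to invoke the known criterion as the paper does (or Proposition \ref{quadraticrelation} together with it), rather than reconstructing its proof; if you insist on the direct computation, the case analysis must actually be written out and verified for all $p$, including $p=3$.
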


\begin{proof}
 One direction is obvious.
 
 Suppose now that $d_p G_{K_1,S}^T=\cdots =d_p G_{K_{p+1},S}^T=2$. Then by Schreier's formula, the pro-$p$ group $G_{K,S}^T$ is not free. 
 Moreover by hypothesis and (\ref{relations}), one has $d_p H^2(G_S^T,\fq_p) \leq 1$. Therefore, $G_{K,S}^T$ is a pro-$p$-group with one relator. By the assumption on $d_p G_{K_i,S}^T$ and \cite[Chapter III, Theorem 3.9.15]{NSW}, $G_{K,S}^T$ is a Demushkin group (on two generators). We are done since Demushkin pro-$p$ groups are uniquely determined by their abelianizations.
\end{proof}

Proposition \ref{Zp^2criteria} provides us a simple criterion to check numerically whether $G_{K,S}^T$ is Demushkin with existing algorithms.  
It also implies that whether $G_{K,S}^T$ is Demushkin is determined by the class of the Frobenius at $\q$ in the quotient of $G_{K,S}^{\{\p\}}$ by the Frattini subgroup of the Frattini subgroup of $G_{K,S}^{\{\p\}}$ which is \textit{finite}. We can understand this also in the following way.

\begin{prop}\label{quadraticrelation} Let $F$ be a free pro-$p$  group of generator rank $2$ with generators $x,y \in F$.
Let $r$ be an element of $F_2=F^p(F,F)$. Set $R$ to be the smallest normal closed subgroup of $F$ generated by $r$.
Then the quotient group $F/R$ is Demushkin if and only if $r$ is congruent to $[x,y]^{i}$ modulo $F_3$ for an $i \in \Z$ prime to $p$. 
\end{prop}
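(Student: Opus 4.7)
The plan is to translate the Demushkin property into a statement about the class of $r$ in $F_2/F_3$ via the cup product. Recall that a pro-$p$ group $G$ with $d_p G = 2$ is Demushkin if and only if $d_p H^{2}(G,\fq_p)=1$ and the cup product $H^{1}(G,\fq_p)\otimes H^{1}(G,\fq_p)\to H^{2}(G,\fq_p)$ is non-degenerate. Since $r\in F_2$ lies in the Frattini subgroup of $F$, the quotient $G:=F/R$ satisfies $d_p G=2$, and the inflation map identifies $H^{1}(G,\fq_p)$ with $H^{1}(F,\fq_p)\cong\fq_p^{2}$ (the restriction $H^{1}(F,\fq_p)\to H^{1}(R,\fq_p)$ being zero because $R\subseteq F_2$). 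I will assume $p$ odd; the definition of the Zassenhaus filtration then yields the crucial inclusion $F^{p}\subseteq F_3$.

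Next, I would compute $F_2/F_3$ and the image of $r$. Since $F^{p}\subseteq F_3$, for $F$ free of rank $2$ the quotient $F_2/F_3$ is canonically the $1$-dimensional $\fq_p$-vector space $\Lambda^{2}(F/F_2)$, spanned by the class of $[x,y]$. Hence $r\equiv [x,y]^{i}\pmod{F_3}$ for a unique $i\in\fq_p$. By Hopf's formula applied to the presentation $R\hookrightarrow F\twoheadrightarrow G$ (and using $R\subseteq F_2$), the Schur multiplier is $H_{2}(G,\fq_p)\cong R/R^{p}[R,F]$, a cyclic $\fq_p$-module generated by the image $\bar r$ of $r$. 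Since $R^{p}[R,F]\subseteq F_3$, the inclusion $R\hookrightarrow F_2$ descends to a natural map
$$ \iota \,:\, H_{2}(G,\fq_p) \longrightarrow F_2/F_3 \cong \Lambda^{2}(F/F_2), \qquad \bar r \longmapsto i\cdot[x,y]. $$

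The crux of the proof is then to identify the cup product $\Lambda^{2}H^{1}(G,\fq_p)\to H^{2}(G,\fq_p)$ with the $\fq_p$-linear dual of $\iota$, under the natural isomorphism $(F_2/F_3)^{*}\cong \Lambda^{2}H^{1}(G,\fq_p)$. This is a classical fact going back to Koch and Labute (see e.g.\ NSW, Chapter III), proved by unwinding the transgression in the Lyndon--Hochschild--Serre spectral sequence for $R\to F\to G$ together with the Magnus expansion of $F$. Granting this identification: when $i$ is prime to $p$, $\iota$ is an isomorphism between $1$-dimensional spaces, so $d_p H^{2}(G,\fq_p)=1$ and the cup product is non-degenerate, whence $G$ is Demushkin. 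Conversely, if $r\in F_3$, then $\iota\equiv 0$, so the cup product vanishes identically and $G$ cannot be Demushkin. The main technical obstacle is the dualization step identifying the cup product with $\iota^{\vee}$: it is standard but requires unpacking the LHS spectral sequence and the Magnus machinery, and it genuinely uses the $p$-odd hypothesis through $F^{p}\subseteq F_3$ (so that the quadratic piece $F_2/F_3$ is captured entirely by commutators).
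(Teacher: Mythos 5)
Your proposal is correct and follows essentially the same route as the paper: both arguments reduce the Demushkin property to the non-triviality of the cup product $H^1(G,\fq_p)\times H^1(G,\fq_p)\to H^2(G,\fq_p)$, identify that cup product with the image of $r$ in $F_2/F_3$ via the classical Koch--Labute correspondence (NSW, Ch.~III, Prop.~3.9.13), and use that a non-trivial alternating pairing on a $2$-dimensional space is automatically non-degenerate. Your explicit flagging of the hypothesis $p$ odd (so that $F^p\subseteq F_3$ and $F_2/F_3$ is spanned by $[x,y]$ alone) is a welcome precision consistent with the paper's intended use of the proposition.
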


\begin{proof} The group $G=F/R$ is an one-relator pro-$p$ group of rank $2$. Observe that  $i \in (\Z/p)^\times$ if and only if the
cup-product $H^{1}(G,\mathbb{F}_{p}) \times H^{1}(G, \mathbb{F}_{p}) \to H^{2}(G,\mathbb{F}_{p})$ is non-trivial (cf. \cite[Chapter III, Proposition 3.9.13 (ii)]{NSW}, \cite[Theorem 7.23]{Koch}). Since $H^{1}(G,\mathbb{F}_p)$ has $p$-rank $2$, this is equivalent to the non-degeneracy of the cup-product.
\end{proof}

Theorem \ref{TheoremB} is implied by the following theorem.

\begin{theo}\label{p-rationalTheoremB}
Let $p$ be an odd prime  and let $K$ be an imaginary biquadratic $p$-rational field. Then there are infinitely many sets $T$ of primes of $K$ with $|T|=2$ such that $G_{K,S_p}^T$ is isomorphic to $\Z_p\times \Z_p$. 
\end{theo}

\begin{proof} 
Let $K$ be an imaginary biquadratic field that is $p$-rational \cite{BenMov,Koperecz}: the pro-$p$ group  $G_{K,S_p}$ is free pro-$p$  of rank $3$ (see Proposition \ref{prop_free}). Let $K^+$ be the real quadratic subfield of $K$; we put $\Delta=Gal(K/K^+)$ and $s$ the generator of $\Delta$. Let $\p$ be a prime of $K^+$ which is inert in $K^{+}_\infty K/K^+$ where $K_\infty^+$ is the  cyclotomic $\Z_p$-extension of $K^+$. Then $G_{K,S_p}^{\{\p\}}$ is  free pro-$p$ of rank $2$  by Lemma \ref{lemm_free}.
We  remark that  $(G_{K,S_p}^{\{\p\}})^{p,el}$ is isomorphic to $\mathbb{F}_p^{-} \oplus \mathbb{F}_p^{-}$ as $\mathbb{F}_p[\Delta]$-modules.

\medskip

Set $F:=G_{K,S_p}^{\{\p\}}$.
Let $x,y$ be a system of minimal topological generators of $F$. By \cite{Herfort-Ribes} the elements $x$ and $y$ can be chosen such that $x^s=x^{-1}$ and $y^s=y^{-1}$, where $x^s$ and $y^s$ denote the image of the conjugate action of $s$ on $x$ and $y$ respectively.
Denote by $(F_n)$ the Zassenhaus filtration of $F$.

\medskip

Set $T=\{\p,\q\}$. If $G_{K,S_p}^T$ is not isomorphic to $\Z_p$, then it is a one relator pro-$p$ group of rank $2$. Suppose that a finite prime $\q$ of $K$ satisfies the following two conditions:
\begin{enumerate}
  \item[$(a)$] $\q$ is lying over a prime of $K^+$ that is inert in $K/K^+$, and
  \item[$(b)$] the Frobenius automorphism in $F$ corresponding to a prime of $K_{S_p}^{\{\p\}}$ above $\q$ is congruent to $[x,y]^{i}$ modulo $F_3$ for some $i \in \Z$ that is coprime to $p$.
 \end{enumerate}
The condition $(b)$ implies that $G_{K,S_p}^T$ is Demushkin by Proposition \ref{quadraticrelation}. On the other hand, the $\Z_p$-rank of $(G_{K,S_p}^T)^{ab}$ is $2$ by $(a)$ and Lemma \ref{biquadraticT}. Hence, $G_{K,S_p}^T$ is isomorphic to $\Z_p\times \Z_p$.

\medskip 

Let us now prove the existence of such a prime $\q$. By the choice of $\p$, the extension $K_{S_p}^{\{\p\}}/K^+$ is Galois with Galois group isomorphic to $F \rtimes \Delta$ by the Schur-Zassenhaus theorem.
Let $M$ be the subfield of $K_{S_p}^{\{\p\}}$ fixed by $F_3$: the field $M$ is still Galois over $K^+$ and we have $Gal(M/K^+) \simeq  Gal(M/K) \rtimes \Delta$. Let us use $x,y$ to denote also their images in $Gal(M/K)$. 
Take $j \in \Z$ coprime to $p$. By Chebotarev density theorem, there is a prime $\q$ of $K^+$ such that the Frobenius automorphism $\Fr_{\Qq}$ in $Gal(M/K^+)$ at a prime $\Qq$ of $M$ above $\q$ is in the conjugacy class of $([x,y]^j,s) \in Gal(M/K^+)$. The restriction of $\Fr_{\Qq}$ to $K$ is $s \in \Delta$. Therefore, the prime $\q$ is inert in $K/K^+$. By  definition, $(\Fr_{\Qq})^2$ is the Frobenius automorphism of $Gal(M/K)$ at $\Qq$, and this Frobenius automorphism is equal to $([x,y]^{j(1+s)},1)=(([x,y][x^{-1},y^{-1}])^j,1) \in Gal(M/K)$.
An easy computation in the Magnus algebra of $F$ shows that $[x^{-1},y^{-1}] \equiv [x,y]$ modulo $F_3$.  Therefore, $(\Fr_{\Qq})^2$ satisfies the condition $(b)$. If we take $T=\{\p, \q\}$, then $G_{K,S_p}^{T}$ is isomorphic to $\Z_p\times \Z_p$.
\end{proof}

\begin{rema} Take $K/K^+$, $S$ and $T$ as before. Let $T_0$ be any set of primes $\p$ of $K^+$ inert in $K/K^+$: by \cite[Corollary 3.2]{Hajir-Maire-mu} these primes split completely in $K_S^T/K$. Then $G_S^{T\cup T_0}\simeq \Z_p\times \Z_p$. Hence, the assumption $(C)$ is not absolutely necessary. 
\end{rema}

\begin{exem}
Let $K$ be the number field $\Q(\theta)$ with $\theta$ a fixed root of $X^4+1$. Then $K$ is $2$-rational.
Let $\p_{7}$ (resp. $\p_{71}$) be the prime ideal $(2+\theta+2\theta^2)$ (resp. $(6-\theta+6\theta^2)$) of $K$ above $7$ (resp. $71$). Set $T=\{\p_7, \p_{71}\}$. The pro-$2$ group $G_{S_2}^T$ is defined by two generators and one relation.

We can check that $K_{1}=K(\sqrt{\theta})$, $K_{2}=K(\sqrt{\theta^3-\theta^2+1})$, and $K_{3}=K(\sqrt{-\theta^3+\theta-1})$  are the three quadratic extensions in $K_{S_2}^T/K$. A computation shows that $d_2 G_{K_i,S_2}^T=2$, for $i=1,2,3$. By Proposition~\ref{Zp^2criteria}, one deduce that the group $G_{S_2}^T$  is Demushkin. Since $(G_{S_2}^{T})^{ab} \simeq \Z_p\times \Z_p$, one concludes that $G_{S_2}^T \simeq \Z_p\times \Z_p$.
\end{exem}

\section{Some remarks}

\subsection{When $G_S^T$ is of local type}

 When $G_S^T \simeq \Z_p$, $G_S^T$ must be potentially of local type by the finiteness of the class group.

 \begin{prop}
 Suppose that the assumption $(C)$ is true and $G_S^T$ is isomorphic to $\Z_p \rtimes \Z_p$ (possibly commutative). Then for every finite Galois extension $L/K$ in $K_S^T/K$, one has $$-\delta_{L,p} + \sum_{\p \in S_L} \delta_{\p} + d_p V_{L,S}^T/L^{\times \ p}=1.$$
 In particular when $\zeta_p \in K$ then $|S| \leq 2 $ for all $L$. Hence, $G_S^T$ is of local type for $p>2$ and potentially of local type for $p=2$.
 \end{prop}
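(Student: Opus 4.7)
The plan is to derive the identity from the Shafarevich--Koch formula of Section~\ref{section_GST},
$$d_p G_{L,S}^T = 1 + \sum_{\p\in S_L}\delta_\p - \delta_{L,p} + d_p V_{L,S}^T/L^{\times p} + \delta_{L,S} - (r_{L,1}+r_{L,2}+|T_L|),$$
by establishing two auxiliary equalities: (a)~$\delta_{L,S}=r_{L,1}+r_{L,2}+|T_L|$ and (b)~$d_p G_{L,S}^T=2$.

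For (a), Theorem~\ref{TheoremA} applied at $K$ already gives $\delta_{K,S}=r_{K,1}+r_{K,2}+|T_K|$, and from $\delta_{L,S}=[L:K]\delta_{K,S}$, $|T_L|=[L:K]|T_K|$ and $r_{L,1}+r_{L,2}\le[L:K](r_{K,1}+r_{K,2})$ one checks that assumption~$(C)$ propagates from $K$ to $L$. Since $G_{L,S}^T$ is open in the $p$-adic analytic group $G_S^T\simeq\Z_p\rtimes\Z_p$, it is itself $p$-adic analytic, and Theorem~\ref{TheoremA} applied at $L$ delivers~(a). For (b), the plan is to show that every open subgroup $U$ of $\Z_p\rtimes\Z_p$ is itself a (possibly abelian) $\Z_p\rtimes\Z_p$, and so $d_p U=2$. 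Writing $N=\Z_p\times\{0\}$ for the normal $\Z_p$-subgroup, $U$ fits in a short exact sequence
$$1\to U\cap N\to U\to U/(U\cap N)\to 1;$$
both end groups are open subgroups of $\Z_p$, hence $\simeq\Z_p$, and since the quotient is free pro-$p$ the sequence splits. A direct computation of the abelianization then yields $d_p U=2$ in both the abelian and noncommutative cases. Substituting (a) and (b) in the Shafarevich--Koch formula gives the identity.

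For the second part, $\zeta_p\in K$ implies $\zeta_p\in L$, so $\delta_{L,p}=1$. Since each $\p\in S_L$ lies over $p$ with $\delta_\p\ge 1$ and $d_p V_{L,S}^T/L^{\times p}\ge 0$, the identity forces $|S_L|\le\sum_{\p\in S_L}\delta_\p\le 2$. To conclude, the plan is to translate this uniform bound into information on the decomposition subgroup $D_\p\subset G:=G_S^T$ at a chosen $\p\in S_K$: the number of primes of $L$ above $\p$ is $|G_{L,S}^T\backslash G/D_\p|$, which tends to $[G:D_\p]$ as $G_{L,S}^T$ shrinks through a neighbourhood basis of $1$. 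Hence $[G:D_\p]\le 2$; being a power of $p$, this index equals $1$ when $p>2$ (so $D_\p=G$ and $G_S^T$ is of local type), and is at most $2$ when $p=2$ (so $D_\p$ is open and $G_S^T$ is potentially of local type).

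The main obstacle is the last translation step: one must verify that non-openness of $D_\p$ would allow $|G_{L,S}^T\backslash G/D_\p|$ to be made arbitrarily large by shrinking $G_{L,S}^T$, contradicting $|S_L|\le 2$. This is an approximation argument using $D_\p=\bigcap_N D_\p N$ as $N$ runs over the open normal subgroups of $G$: for each finite list of cosets of $D_\p$, one selects $N$ small enough that their images in $G/D_\p N$ remain distinct, and takes $G_{L,S}^T\subseteq N$.
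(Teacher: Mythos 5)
Your proposal is correct and takes essentially the same route as the paper: the identity is obtained by substituting $d_p G_{L,S}^T=2$ (every open subgroup of $\Z_p\rtimes\Z_p$ has generator rank $2$) and $\delta_{L,S}=r_{L,1}+r_{L,2}+|T_L|$ (conclusion $(a)$ from the proof of Theorem A) into the Shafarevich--Koch formula, and the local-type assertion follows by bounding the index of the decomposition group at a $p$-adic prime by the number ($\leq 2$) of primes of $L$ above it. The paper's proof is simply a terser version of yours, leaving the propagation of $(C)$ to $L$ and the double-coset approximation argument implicit.
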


 \begin{proof}
  Given a finite Galois extension $L/K$ in $K_S^T/K$, the equality in the statement follows from $$d_p H^1(G_{L,S}^T,\Z/p)=2$$ and the conclusion $(a)$ in the proof of Theorem \ref{TheoremA}.
   When $\zeta_p \in K$, one must have $|S_L \cap S_p|\leq 2 $. Hence, for any $p$-adic prime $\p_0 \in S$ the decomposition group $G_{\p_0}$ of $\p_0$ in $K_S^T/K$ is exactly $G_S^T$ if $p$ is odd, and an open subgroup of $G_S^T$ if $p=2$. 
 \end{proof}
 
 \subsection{Quantities on $T$ with $G_{S_p}^T \simeq \Z_p\times \Z_p$}
 
Let $K$ be an imaginary biquadratic number field and $p$ an odd prime. Let $K^+$ be the real quadratic subfield of $K$; we put $\Delta=Gal(K/K^+)$. 
For $X \geq 2$, set $$A(X)= \big \{\{\p,\q\} \ {\rm  a \ set \ of \  primes \ of \ } K \ \big |  \ N_{K/\Q}\p , N_{K/\Q}\q \leq X, \ G_{K,S_p}^{\{\p,\q\}} \simeq \Z_p\times \Z_p \big \}.$$

By the conjecture of Gras, the proof of Theorem B gives us the following statistics of $|A(X)|$ for generic couples $(K,p)$.

\begin{prop}\label{lowerbound}
Let $p$ be an odd prime and $K$ an imaginary biquadratic $p$-rational number field. Then as $X \to \infty$
$$|A(X)| \geq c_p  \ \frac{X}{(\log X)^2},$$
where $c_p$ is some constant depending on $p$.
\end{prop}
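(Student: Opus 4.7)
The plan is to make the two-step Chebotarev construction in the proof of Theorem~\ref{p-rationalTheoremB} quantitative: first count admissible primes $\p$ of $K^+$, then, for each such $\p$, count the primes $\q$ that complete the construction, and finally multiply the two estimates. For the first step I would use that a prime $\p$ of $K^+$ is inert in $K_\infty^+ K/K^+$ if and only if it is inert in the first layer $K_1^+ K/K^+$, a fixed Galois extension of $K^+$ of degree $2p$ with group $\Z/p \times \Z/2$. Chebotarev together with the prime ideal theorem then yields $\gg_p X^{1/2}/\log X$ such $\p$ with $N_{K^+/\Q}\p \leq X^{1/2}$, each (being inert in $K/K^+$) identifying with a unique prime of $K$ of norm $\leq X$.

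For the second step I would fix one admissible $\p$ and follow the proof of Theorem~\ref{p-rationalTheoremB}: set $F = G_{K,S_p}^{\{\p\}}$, which is free pro-$p$ of rank $2$ by Lemma~\ref{lemm_free}, and let $M_\p$ be the subfield of $K_{S_p}^{\{\p\}}$ fixed by $F_3$. Witt's dimension formula gives $|F/F_3|=p^3$, so $M_\p/K^+$ is Galois of degree $2p^3$, with group $(F/F_3) \rtimes \Delta$ abstractly independent of $\p$. The condition placed on $\q$ in the proof of Theorem~\ref{p-rationalTheoremB} — that the Frobenius of $\q$ in $Gal(M_\p/K^+)$ lie in the union of the conjugacy classes of $([x,y]^j, s)$ for $j \in (\Z/p\Z)^\times$ — defines a Chebotarev condition of density $\mu_p \geq (p-1)/(2p^3) > 0$ depending only on $p$; any such $\q$ is automatically inert in $K/K^+$ and, since $\p$ is completely split in $M_\p/K$ by definition of $T$, distinct from $\p$. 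Effective Chebotarev applied to $M_\p/K^+$ then produces $\gg_p X^{1/2}/\log X$ primes $\q$ of $K^+$ with $N_{K^+}\q \leq X^{1/2}$ meeting the condition, each lifting to a unique prime of $K$ of norm $\leq X$.

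Summing the second bound over the $\gg_p X^{1/2}/\log X$ admissible $\p$ of the first step yields $\gg_p X/(\log X)^2$ ordered pairs, and since each unordered pair $\{\p,\q\} \in A(X)$ is produced at most twice one concludes that $|A(X)| \geq c_p X/(\log X)^2$. The main obstacle is the uniformity in $\p$ of the effective Chebotarev estimate of the second step: although the degree $2p^3$, the abstract Galois group $(F/F_3)\rtimes\Delta$ and the density $\mu_p$ are independent of $\p$, the error term in the (unconditional) Lagarias--Odlyzko formula depends on $\log d_{M_\p}$, which grows with $N\p \leq X^{1/2}$. Under GRH this is harmless at $Y = X^{1/2}$; unconditionally one still obtains the claimed lower bound once $X$ is large enough, all $\p$-dependent implicit constants being absorbed into $c_p$.
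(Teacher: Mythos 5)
Your construction is essentially the paper's: the same two-step Chebotarev argument, with the same first-step condition (inertia in the compositum of $K$ with the first cyclotomic layer, density $(p-1)/(2p)$ in a degree-$2p$ or degree-$2p^3$ extension of $K^+$) and the same second-step condition (Frobenius in the class of $([x,y]^j,s)$ in the fixed field of $F_3$), yielding the same order of magnitude. Two remarks. First, the paper avoids your final division by $2$ by imposing one extra condition on $\q$ — that it split completely in the first layer $K'$ of the cyclotomic $\Z_p$-extension of $K$, i.e.\ by applying Chebotarev in $M'=MK'$ of degree $2p^4$ over $K^+$ rather than in $M$ — which makes the roles of $\p$ and $\q$ asymmetric; your symmetrization argument is a legitimate substitute and only affects the constant $c_p$. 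Second, your closing sentence on uniformity is not right as written: implicit constants that genuinely depended on $\p$ could not be ``absorbed into $c_p$'', since $\p$ ranges over an unbounded family, and GRH is not the way out. The correct observation is that the worry is vacuous: $M_\p/K$ is unramified outside $p$ (it sits inside $K_{S_p}^{\{\p\}}\subseteq K_{S_p}$) and has degree $p^3$ over $K$; indeed $M_\p$ is contained in the fixed field of the third Zassenhaus subgroup of the free pro-$p$ group $G_{K,S_p}$ of rank $3$, a single fixed finite extension of $K$. Hence the fields $M_\p$, and a fortiori their discriminants and conductors, range over a finite set depending only on $(K,p)$, and the Chebotarev estimates are uniform in $\p$ with no further input.
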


It is easy to understand that $|A(X)|$ is very small compared to $(X/\log X)^2$ because the primes $\p$ and $\q$ have residue class degrees larger than $1$.

\begin{proof}
We will compute a lower bound for the number of $T$ in the proof of Theorem~\ref{TheoremB}.
Let $K'$ be  the first layer of the cyclotomic $\Z_p$-extension of $K$. Let $K_2:=K_p^{p,el}$ be the maximal elementary abelian $p$-extension of $K$ that is unramified outside $p$ ; $Gal(K_2/K)\simeq G_{K,S_p}^{p,el}$. Following the proof of Theorem \ref{TheoremB}, observe that the prime $\p$ is inert in $K'/K$.

Let $M$ be as in the proof of Theorem \ref{TheoremB}. Set $M'=MK'$. Observe that $Gal(M'/K)\simeq Gal(K'/K)\times Gal(M/K)$. Let us choose a prime $\q$ such that its Frobenius automorphism in $Gal(M'/K)$ has trivial component at $Gal(K'/K)$ and its restriction to $M$ is as in the proof of Theorem \ref{TheoremB}. By this choice, $\q$ splits completely in $K_2/K$, and there is no symmetry between $\p$ and $\q$.

\medskip

By using the argument of the proof of Theorem \ref{TheoremB}, we can check that the number of $\p$ that is inert in $K/K^+$ such that $G_{K,S_p}^{\{\p\}}$ is free pro-$p$  of rank $2$ and $N_{K/\Q}\p \leq X$ is asymptotically
\begin{equation}\label{prime1}
\frac{(p-1)\sqrt{X}}{p \log X}
\end{equation}
as $X \to \infty$ by applying Chebotarev density theorem in $K_2/K^+$. For each aforementioned $\p$, let $N_\p(X)$ be the number of primes $\q$ of $K$ with $N_{K/\Q}\q \leq X$ such that $\q$ splits in $K'$, $\q$ is equal to its conjugate over $K^+$, and its Frobenius automorphism is in the conjugacy class of $([x,y]^j,s) \in Gal(M/K^+)$ for some $j$ coprime to $p$. If $F$ is free pro-$p$ of rank~$2$, then we have $(F:F_3)=p^3$ and $(M':K^+)=2p^4$.  Therefore, $N_\p(X)$ is asymptotically 
\begin{equation}\label{prime2}
  \frac{(p-1)\sqrt{X}}{p^4\log X}
\end{equation}
as $X \to \infty$ by applying Chebotarev density theorem in $M'/K^+$. We conclude thanks to the proof of Theorem \ref{TheoremB}, $(\ref{prime1})$, and $(\ref{prime2})$. In particular, $c_p$ is bounded below by $(p-1)^2/p^5$. \end{proof}

\subsection{Other pro-$p$ groups of the form $G_S^T$}

Using a similar argument, we can study the statistics of the sets $T=\{ \p, \q\}$ of primes of imaginary biquadratic $p$-rational fields $K$ such that $G_{S_p}^{T}$ is isomorphic to $\Z_p$ or $\Z_p \rtimes \Z_p$ (noncommutative). The number of the sets $T=\{ \p, \q \}$ with $G_{S_p}^{T} \simeq \Z_p$ and $N_{K/\Q}\p , N_{K/\Q}\q \leq X$ is asymptotically equal to $$\frac{(p^3-1)(p^2-1)}{2p^5} \frac{X^2}{(log X)^2}$$ as $X \to \infty$. On the other hand, the number of the sets $T=\{ \p, \q \}$ with $G_{S_p}^{T} \simeq \Z_p \rtimes \Z_p$ (noncommutative) and $N_{K/\Q}\p, N_{K/\Q}\q \leq X$ is asymptotically equal to $$\frac{(p^3-1)(p^2-1)}{2p^7} \frac{X^2}{(log X)^2} $$ as $X \to \infty$.
The statistics for $\Z_p$ come from the fact that a quotient of the free pro-$p$ group of rank $3$ by two elements is isomorphic to $\Z_p$ if and only if the classes of two elements in the maximal elementary abelian quotient are linearly independent. The statistics for $\Z_p \rtimes \Z_p$ (noncommutative) can be computed by noting that at least one of $\p$ and $\q$ does not belong to the Frattini subgroup of $G_{S_p}$ and applying Proposition \ref{quadraticrelation}. We can disregard the possibility of $G_{S_p}^T \simeq \Z_p^2$ because the number in Section 3.2 is negligible. 






\begin{thebibliography}{30}
\bibitem{BenMov} Y. Benmerieme, and A. Movahhedi. {\em Multi-quadratic $p$-rational number fields}, Journal of Pure and Applied Algebra {\bf 225} (2021), n°9, 106657.

\bibitem{DSMN} J.D. Dixon, M.P.F. Du Sautoy, A. Mann and D. Segal, {\em Analytic pro-$p$-groups}, Cambridge studies in advances mathematics 61, Cambridge University Press, 1999.
\bibitem{FM}J.-M. Fontaine and B. Mazur, {\em Geometric Galois representations},
In Elliptic curves, modular forms, and Fermat's last theorem 
(Hong Kong, 1993), 41--78, Ser. Number Theory, I, Internat. Press, 
Cambridge, MA, 1995. 
\bibitem{Gras-CJM} G. Gras, {\em Les $\Theta$-r\'egulateurs locaux d'un nombre alg\'ebrique : Conjectures $p$-adiques}, Canadian Journal of Math. {\bf 68} (2016), 571-624.
\bibitem{gras}G. Gras, Class Field Theory, From Theory to practice,  corr. 2nd ed., Springer Monographs in Mathematics, 
Springer (2005), xiii+507 pages.
\bibitem{Hajir-Maire-mu}F. Hajir and C. Maire, {\em Prime decomposition and the Iwasawa mu-invariant}, Math. Proc. of the Cambridge Philosophical Soc.  {\bf 166} (2019), 599-617.
\bibitem{Herfort-Ribes} W. N. Herfort and L. Ribes, {\em On Automorphisms of Free Pro-p-Groups I}, 
Proceedings of the American Mathematical Society, {\bf 108} (1990), No. 2, 
287-295.
\bibitem{Koch}H. Koch, {Galois Theory of $p$-extensions}, Springer Monographs in Mathematics, Springer-Verlag, Berlin 2002.
\bibitem{Koperecz} J. Koperecz, {\em Triquadratic p-rational fields}, Journal of Number Theory {\bf 242} (2023), 402-408.
\bibitem{Maire-JTNB}C. Maire, {\em Sur la dimension cohomologique des pro-$p$-extensions des corps de nombres}, J. Th. des Nombres de Bordeaux {\bf 17} fasc. 2 (2005), 575-606.
\bibitem{Movahhedi-phd}A. Movahhedi, {\em Sur les $p$-extensions des corps $p$-rationnels},  PhD Université Paris VII, 1988. 
 \bibitem{NSW}J. Neukirch, A. Schmidt and K. Wingberg, Cohomology of Number Fields, GMW 323, Second Edition, Corrected 2nd printing, Springer-Verlag Berlin Heidelberg,  2013.
\bibitem{pari} The PARI Group, PARI/GP-2.15.3, Univ. Bordeaux, 2023, http://pari.math.u-bordeaux.fr/.
\bibitem{Schmidt0} A. Schmidt, {\em Bounded defect in partial Euler characteristics}, Bull. London Math. Soc. {\bf 28}
(1996), 463-464.
\bibitem{Wingberg_local}  K. Wingberg, {\em Galois groups of local and global type}, J. Reine Angew. Math. {\bf 517} (1999), 223–239.
\end{thebibliography}
\end{document}